\documentclass[reqno,12pt,a4letter]{amsart}
\usepackage{amsmath, amsxtra, amssymb, latexsym, amscd, amsthm}
\usepackage{graphicx, color}
\usepackage[active]{srcltx}
\usepackage[utf8]{inputenc}
\usepackage[mathscr]{euscript}
\usepackage{mathrsfs,cite}
\usepackage[english]{babel}
\usepackage{color,enumerate,comment}
\usepackage[active]{srcltx}
\setlength{\oddsidemargin}{-.1cm}
\setlength{\evensidemargin}{-.1cm}

\setlength{\textwidth}{6.5in}
\setlength{\textheight}{9in}
\setlength{\headheight}{0in}
\setlength{\topmargin}{-1.7cm}
\setlength{\headsep}{1.25cm}
\setlength{\footskip}{.7in}
\setlength{\baselineskip}{12pt}

\newtheorem{theorem}{Theorem}[section]
\newtheorem{corollary}[theorem]{Corollary}
\newtheorem{lemma}[theorem]{Lemma}
\newtheorem{proposition}[theorem]{Proposition}
\theoremstyle{definition}
\newtheorem{definition}[theorem]{Definition}
\newtheorem{example}[theorem]{Example}
\newtheorem{remark}[theorem]{Remark}


\newcommand{\Limsup}{\mathop{{\rm Lim}\,{\rm sup}}}
\numberwithin{equation}{section}

\title[Second-order optimality conditions for MOP]{Second-order optimality conditions for multiobjective optimization problems with constraints}

\author[N.Q. Huy]{Nguyen Quang Huy}
\address[N.Q. Huy]{Department of Mathematics, Hanoi Pedagogical University 2, Xuan Hoa, Phuc Yen, Vinh Phuc, Vietnam}
\email{\tt huyngq308@gmail.com; nqhuy@hpu2.edu.vn}

\author[B.T. Kien]{Bui Trong Kien}

\address[B.T. Kien]{Department of Optimization and Control Theory, Institute of Mathematics, VAST, 18
Hoang Quoc Viet Road, Hanoi, Vietnam }
\email{\tt btkien@math.ac.vn}

\author[G.M. Lee]{Gue Myung Lee}

\address[G.M. Lee]{Department of Applied Mathematics, Pukyong National University, Busan 48513, Korea}
\email{\tt gmlee@pknu.ac.kr}

\author[N.V. Tuyen]{Nguyen Van Tuyen$^*$}
\address[N.V. Tuyen]{Department of Mathematics, Hanoi Pedagogical University 2, Xuan Hoa, Phuc Yen, Vinh Phuc, Vietnam}
\email{\tt tuyensp2@yahoo.com; nguyenvantuyen83@hpu2.edu.vn}

\keywords{Second-order subdifferential, second-order variations,  second-order weak directional derivative, second-order necessary optimality conditions, weak Pareto efficient solutions}

\subjclass[2010]{49J52, 90C29, 90C46, 65K10, 49K30}
\thanks{$^*$Corresponding.}
\thanks{The research of Nguyen Van Tuyen was supported by the Ministry of Education and Training of Vietnam [grant number B2018-SP2-14]}
\date{\today}

\begin{document}

\begin{abstract}
In this paper, we introduce  the second-order subdifferentials for functions which are  G\^ateaux differentiable on an open set and whose G\^ateaux derivative mapping is locally Lipschitz.  Based on properties of this
kind of second-order subdifferentials and techniques of variational
analysis, we derive second-order necessary conditions for  weak Pareto efficient solutions of multiobjective  programming problems with constraints.
\end{abstract}

\maketitle


\section{Introduction}

Let $X, Y$, and $Z$ be Banach spaces with the dual spaces $X^*, Y^*,$
and $Z^*$, respectively. Throughout the paper we assume that the
unit ball $B^*\subset X^*$ is weak$^*$-sequentially compact. Let $D$
be a nonempty open subset in $X$ and  $Q$ be a closed convex set in
$Z$ with nonempty interior. Given mappings  $f_j\colon D\to
\mathbb{R}$,  $H\colon D\to Y$ and $G:
D\to Z$, we consider the following constrained multiobjective
programming problem
\begin{equation}\notag
(P)\quad\quad
\begin{cases}
 \text{Min}_{\mathbb{R}^m_+} F(x):=(f_1(x), \ldots, f_m(x))\\
\text{subject to}\\
 H(x)= 0,\\
 G(x)\in Q.
 \end{cases}
 \end{equation}
 The prototype of such problem arises in control theory with state equations and pointwise constraints. The goal of this paper is to derive second-order
necessary conditions for  problem $(P)$ in term of a notion of
second-order subdifferentials for functions which are of class
$C^{1,1}(D)$. Recall that a function $\phi \colon D\to \mathbb{R}$
is said to be of class $C^{1, 1}(D)$ if its first-order G\^ateaux
derivative $\phi'\colon D\to X^*$ is locally Lipschitz on $D$.

By introducing generalized second-order directional derivatives and using
techniques of variational analysis, P\'ales and Zeidan
\cite{Pales1} gave second-order necessary conditions for mathematical programming  problems in the form $(P)$, i.e., when $m=1$, and some problems where the objective function is the maximum of smooth
functions depending on a parameter from a compact metric space.
To our knowledge, this result has been the best one on the second-order
necessary conditions so far. Instead of generalized second-order
directional derivatives,  Georgiev and Zlateva \cite{Pando}
introduced the so-called second-order Clarke subdifferentials for functions
of class $C^{1, 1}(D)$ in the case, where the dual $X^*$ is
separable. This definition is based on a result of Christensen
related to the almost everywhere differentiability of Lipschitz
functions $\psi\colon X\to Z$ with $Z$ is a Banach space which has a
Radon--Nikodym property. Then the authors obtained second-order
necessary conditions and sufficiently conditions for mathematical programming  problems in the form $(P)$ in terms of second-order Clarke subdifferentials.  

The study of second-order optimality conditions for vector
optimization problems is of the concern of some mathematicians. For the papers which have close connection to the present
work, we refer the readers to \cite{Gfrerer,Khan13,Khan17} and references therein. Let us give briefly  some comments on the considered problems and the obtained results of those papers. In \cite{Gfrerer}, under the Robinson
qualification constraint conditions, the author derived second-order
necessary optimality conditions and sufficient optimality
conditions for vector optimization problems where the mappings are
second-order directionally differentiable.  By using the Dubovitskii--Milyutin approach, the authors  \cite{Khan13,Khan17} obtained some second-order necessary optimality conditions in terms of second-order tangential derivatives for set-valued optimization problems. For more discussions on the recent development of the second-order derivatives relative to optimal
conditions in nonsmooth analysis, the reader is referred to  \cite{Bonnans99,Borwein,Chieu-17,Kien-18,Kim-18,Mord1,Mord2,Pales2,Yang92,Huy16,Huy17,Kawasaki88,Pales1994,Tuyen18} and the references therein.

In this paper we derive the second-order necessary
optimality conditions for problem $(P)$, where the Robinson
qualification constraint conditions may not be valid and  the
mappings may not be second-order differentiable. To do this, we first introduce second-order subdifferentials for functions of class $C^{1,1}(D)$ and give some properties for this  kind of second-order
subdifferentials. We then  utilize the Dubovitskii--Milyutin approach as
well as techniques of variational analysis of \cite{Pales1} to deal
with the problem. The obtained results improve and generalize the corresponding results of \cite[Theorem 2.4]{Pando}, \cite[Theorem 6]{Pales1} and \cite[Theorem 8.2]{Pales1994}. We also show that our results still hold  for critical directions which may not be regular.   

The rest of our paper consists of two sections. In Section \ref{Sect2}, we
present some properties of second-order subdifferentals and some
results related to variation sets of second-order. Section \ref{Sect3} is destined for first- and second-order necessary conditions for weak Pareto efficient solutions of $(P)$. 
\section{Second-order subdifferentials and second-order variations}
\label{Sect2}
\subsection{Second-order subdifferentials}
Let  $f\colon D\to \mathbb{R}$  be a locally Lipschitz function on $D$. Recall that the Clarke subdifferential of $f$ at $\hat x\in D$ is defined by
$$
\partial f(\hat x):=\{x^*\in X^*\;|\; \langle x^*, h\rangle \leq f^\circ(\hat x; h),\ \ \forall h\in
X\},
$$ where
$$
f^\circ (\hat x; h):=\limsup_{x\to\hat x, \atop{t\to 0^+}}\frac{f(x+ th)-
    f(x)}{t}
$$ is the Clarke directional derivative of $f$ at $\hat
x$ in the direction $h$.

Denote by ${\mathcal L}(X\times X)$ the Banach space of all
bilinear continuous functionals $L\colon X\times X\to \mathbb{R}$ with the norm
$$\|L\|:=\sup_{\|h_1\|=1\atop {\|h_2\|=1}}|L(h_1, h_2)|,
$$
and ${\mathcal L}(X, X^*)$ the Banach space of all linear continuous
mappings $L\colon X\to X^*$ with the norm $$\|L\|:=\displaystyle\sup_{\|h\|=1}\|L(h)\|_*.$$
It is well-known that ${\mathcal L}(X\times X)$ and  ${\mathcal L}(X, X^*)$ are
isometrically isomorphic; see \cite[Section 2.2.5]{Alek}. So, in the
sequel, we identify ${\mathcal L}(X\times X)$ and  ${\mathcal L}(X,
X^*)$. By this way, if $f\colon D\to \mathbb{R}$ is twice G\^ateaux differentiable
at $\hat x\in D$, then $f''(\hat x)$ is a linear mapping from $X$ to
$X^*$. This suggests us to introduce the following definition.

\begin{definition}\label{def-second-order-subdiff}{\rm  Let $f\in C^{1, 1}(D)$ and $\hat x\in D$. The {\em second-order subdifferential} of $f$ at $\hat x$ is the set-valued   map $$\partial^2 f(\hat x)\colon X\rightrightarrows X^*,$$ which is     defined by
    $$\partial^2 f(\hat x)(d):=\partial\langle f'(\,\cdot\,), d\rangle (\hat x), \ \ \forall \ d\in X.$$
}
\end{definition}
Note that, by the Hahn--Banach Theorem, $\partial^2 f(\hat
x)(d)$ is always nonempty for all $d\in X$.

\medskip
The following proposition summarizes some properties of $
\partial^2 f(\,\cdot\,)$.

\begin{proposition}\label{prop2.1} Suppose that  $f$ and $g$ are of class $C^{1, 1}(D)$.  Then the following assertions hold:
    \begin{enumerate}[\rm (i)]
        \item The mapping $\partial^2 f(\hat x):
        X\rightrightarrows X^*$  has nonempty convex and $w^*$-compact
        valued.

        \item  For each $d\in X$, the mapping $x\mapsto
        \partial^2 f(x)(d)$  from  $(D, \|\cdot\|)$ to $X^*$ is local bounded and upper
        semicontinuous at $\hat x$, that is, if  $x_n\to \hat x$,
        $L_n\xrightarrow{w^*}{L}$ and $L_n\in\partial^2 f(x_n)(d)$, then
        $L\in \partial^2 f(\hat x)(d)$.

        \item   If $f$ is twice continuously G\^ateaux differentiable
        at $\hat x\in D$, then  $\partial^2 f(\hat x)=\{f''(\hat x)\}$.

        \item   For any $d\in X$ and $s\in \mathbb{R}$, one has
        \begin{align*}
        \partial^2 f(\hat x)(sd)&= s\partial^2 f(\hat x)(d);
        \\
        \partial^2 (f + g)(\hat x)(d)&\subset \partial^2
        f(\hat x)(d) +\partial^2 g(\hat x)(d).
        \end{align*}
    \end{enumerate}
\end{proposition}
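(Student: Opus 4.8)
The statement collects four elementary consequences of the fact that $\partial^2 f(\hat x)(d)$ is by definition the Clarke subdifferential of the scalar locally Lipschitz function $g_d:=\langle f'(\,\cdot\,),d\rangle$ on $D$. The plan is to reduce every item to a known property of the Clarke subdifferential applied to $g_d$ (and to $g_d+h_d$ where $h_d:=\langle g'(\,\cdot\,),d\rangle$), after first checking that $g_d$ is genuinely locally Lipschitz: since $f\in C^{1,1}(D)$, the map $f'$ is locally Lipschitz from $D$ into $X^*$, hence $x\mapsto\langle f'(x),d\rangle$ is locally Lipschitz with constant $\|d\|$ times the local Lipschitz constant of $f'$. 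Once this is in place, (i) is immediate from the standard fact that the Clarke subdifferential of a locally Lipschitz real-valued function on a Banach space is nonempty, convex, and weak$^*$-compact at each point (nonemptiness was already recorded in the excerpt via Hahn--Banach).

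For (ii), I would invoke the local boundedness and the closed-graph (upper semicontinuity in the stated sequential sense) properties of the Clarke subdifferential of the fixed locally Lipschitz function $g_d$. Concretely: local boundedness of $\partial g_d$ near $\hat x$ follows because any $x^*\in\partial g_d(x)$ satisfies $\|x^*\|_*\le \mathrm{Lip}(g_d;\text{near }\hat x)$ on a neighborhood; and if $x_n\to\hat x$, $L_n\in\partial g_d(x_n)$ and $L_n\xrightarrow{w^*}L$, then for every $h\in X$ one has $\langle L_n,h\rangle\le g_d^\circ(x_n;h)$, and passing to the limit using the upper semicontinuity of $(x,h)\mapsto g_d^\circ(x;h)$ (here the weak$^*$-sequential compactness of $B^*$ and the fact that $g_d^\circ(x;\cdot)$ is finite and positively homogeneous are exactly what is needed) yields $\langle L,h\rangle\le g_d^\circ(\hat x;h)$, i.e. $L\in\partial g_d(\hat x)=\partial^2 f(\hat x)(d)$.

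For (iii), if $f$ is twice continuously G\^ateaux differentiable at $\hat x$ then $f'$ is (Fr\'echet, hence Gateaux) strictly differentiable near $\hat x$ in an appropriate sense, so $g_d=\langle f'(\,\cdot\,),d\rangle$ is strictly differentiable at $\hat x$ with derivative $x\mapsto \langle f''(\hat x)x,d\rangle=(f''(\hat x))(d)$ under the identification $\mathcal L(X,X^*)\cong\mathcal L(X\times X)$; since the Clarke subdifferential of a strictly differentiable function is the singleton consisting of its derivative, we get $\partial^2 f(\hat x)(d)=\{f''(\hat x)(d)\}$ for all $d$, which is the asserted equality $\partial^2 f(\hat x)=\{f''(\hat x)\}$. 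Finally (iv): positive and negative homogeneity in $d$ follows from $g_{sd}=s\,g_d$ together with the scaling rule $\partial(s\,g_d)=s\,\partial g_d$ for the Clarke subdifferential (valid for any real scalar $s$, including $s<0$, since Clarke's subdifferential satisfies $\partial(-g)= -\partial g$); and the sum inclusion follows from $g_d+h_d=\langle (f+g)'(\,\cdot\,),d\rangle$ combined with the Clarke sum rule $\partial(g_d+h_d)\subset\partial g_d+\partial h_d$.

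The only mildly delicate point—and the one I would write out carefully—is the limiting argument in (ii): one must make sure the inequality $\langle L_n,h\rangle\le g_d^\circ(x_n;h)$ survives the weak$^*$ limit, for which the upper semicontinuity of the Clarke directional derivative in the base point and the weak$^*$-sequential compactness hypothesis on $B^*$ (together with the uniform bound on the $L_n$ coming from local boundedness, guaranteeing the weak$^*$ limit lies in $X^*$) are used in an essential way. Everything else is a direct transcription of textbook properties of $\partial(\cdot)$ for locally Lipschitz scalar functions.
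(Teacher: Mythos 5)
Your proposal is correct and follows essentially the same route as the paper: both reduce every item to standard Clarke-subdifferential calculus for the locally Lipschitz scalar function $x\mapsto\langle f'(x),d\rangle$ (nonemptiness/convexity/weak$^*$-compactness and local boundedness with the closed-graph property for (i)--(ii), and the scaling and sum rules for (iv)). The only cosmetic difference is in (iii), where you invoke the fact that strict differentiability forces the Clarke subdifferential to be a singleton, while the paper computes the Clarke directional derivative of $\langle f'(\,\cdot\,),d\rangle$ directly; these are the same textbook argument in two guises.
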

\begin{proof} (i) Since $f'(\,\cdot\,)$ is locally Lipschitz
    around $\hat x$ with constant $l>0$, the mapping
    $$x\mapsto \langle f'(x), d\rangle$$ is locally Lipschitz around
    $\hat x$ with constant $l\|d\|$.  Hence,
    $$\|L\|\leq l\|d\|, \ \ \forall L\in \partial^2 f(\hat x)(d).$$ By the
    Banach--Alaoglu--Bourbaki Theorem, $\partial^2 f(\hat x)(d)$ is a
    $w^*$-compact set. The convexity of $\partial^2 f(\hat x)(d)$ is easy to check, so omitted.

    \medskip
    \noindent (ii) The assertion follows directly from
    \cite[Proposition 2.1.5]{Clarke2}.

    \medskip
    \noindent (iii) Fix $d\in X$, we have
    \begin{align}
    \partial \langle f'(\,\cdot\,), d\rangle(\hat x) &=\bigg\{L\in X^*\,|\, \langle L,
    h\rangle \leq \limsup_{x\to\hat x\atop{\varepsilon\to 0^+}}\frac{ \langle f'(x + \varepsilon h)- f'(x),
        d\rangle}{\varepsilon},\ \ \forall h\in X\bigg\}\notag\\
    &=\{ L\,|\, \langle L, h\rangle \leq f''(\hat x)(d, h),\ \ \forall h\in
    X\}=\{L\,|\, L= f''(\hat x)(d)\}\notag.
    \end{align}
    Hence, $\partial^2 f(\hat x)(d)= f''(\hat x)(d)$ for all $d\in X.$

\medskip
    \noindent (iv) By \cite[Proposition 2.3.1]{Clarke2}, we have
    \begin{align}
    \partial^2 f(\hat x)(sd)&=\partial\langle f'(\,\cdot\,), (sd)\rangle(\hat x)=\partial(s\langle f'(\,\cdot\,),
    d\rangle)(\hat x)\notag\\
    &=s\partial\langle f'(\,\cdot\,), d\rangle(\hat x)= s\partial^2 f(\hat
    x)(d).\notag
    \end{align}
    The second assertion follows directly from \cite[Proposition 2.3.3]{Clarke2}.
\end{proof}

\bigskip
To illustrate how to compute $\partial^2 f(\hat x)$ we give a simple
example for the case where $X=\mathbb{R}^2$.

\begin{example}\rm  Let $X=\mathbb{R}^2$ and $f(x, y)=\int_0^x |s|ds + y^2$.
    Then
    \begin{equation}\notag
    \partial^2 f(0, 0)= \Big\{ \Big(
    \begin{array}{cc}
    a & 0 \\
    0 & 2 \\
    \end{array}
    \Big)\;|\; a\in [-1, 1]\Big\}.
    \end{equation}
    In fact, we have $f'(x,y)=(|x|, 2y)$. Hence, for any $d=(d_1,
    d_2)$, one has $$\langle f'(x, y), d\rangle= d_1|x|+ 2d_2 y.$$ It
    follows that
    \begin{align}\notag
    \partial^2 f(0, 0)(d)&= (d_1\partial(|x|)|_{x=0},  2d_2\partial (y)|_{y=0})=\{(ad_1, 2d_2)\;|\; a\in [-1, 1]\}.
    \end{align}Hence
    \begin{equation}\notag
    \partial^2 f(0, 0)(d)=\Big\{\Big(
    \begin{array}{cc}
    a & 0 \\
    0 & 2 \\
    \end{array}\Big) \Big(
    \begin{array}{c}
    d_1\\
    d_2 \\
    \end{array}\Big)\,|\, a\in[-1,1]\Big\}.
    \end{equation} We obtain the desired formula.
\end{example}

The following mean value theorem plays an important role in our
paper.

\begin{theorem}\label{mean-th}  Let $f\in C^{1,1}(D)$. Then, for every $a,b\in D$ with $[a,b]\subset D$,
    there exist $\xi\in (a,b)$ and $L\in \partial^2 f(\xi)(b-a)$
    such that
    $$f(b)-f(a)-\langle f'(a), b-a\rangle =\frac{1}{2}\langle L,  b-a\rangle.$$
\end{theorem}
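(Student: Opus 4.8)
The plan is to reduce the statement to a one–dimensional problem along the segment $[a,b]$ and then to combine the classical Rolle theorem with Lebourg's mean value theorem for locally Lipschitz functions. Put $h:=b-a$ and let $c:=2\big(f(b)-f(a)-\langle f'(a),h\rangle\big)$; with this notation the desired conclusion is precisely that $c=\langle L,h\rangle$ for some $\xi\in(a,b)$ and $L\in\partial^2 f(\xi)(h)$. Since $D$ is open and $[a,b]\subset D$, the map $t\mapsto a+th$ carries some open interval $I\supset[0,1]$ into $D$, so
$$g(t):=f(a+th)-f(a)-t\langle f'(a),h\rangle-\tfrac{t^2}{2}\,c$$
is well defined on $I$. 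As $f$ is G\^ateaux differentiable with $f'$ continuous, $t\mapsto f(a+th)$ is of class $C^1$ with derivative $t\mapsto\langle f'(a+th),h\rangle$; hence $g\in C^1(I)$, its derivative is $g'(t)=\langle f'(a+th)-f'(a),h\rangle-tc$, and one checks at once that $g(0)=g(1)=0$ and $g'(0)=0$.

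First, Rolle's theorem applied to $g$ on $[0,1]$ yields $\tau\in(0,1)$ with $g'(\tau)=0$. Since $f'$ is locally Lipschitz, $g'$ is locally Lipschitz on $I$, so Lebourg's mean value theorem (\cite[Theorem 2.3.7]{Clarke2}) applied to $g'$ on $[0,\tau]$ produces $s\in(0,\tau)$ and $v\in\partial(g')(s)$ with $v\,\tau=g'(\tau)-g'(0)=0$; as $\tau>0$ this gives $0\in\partial(g')(s)$. Writing $g'=w+\rho$ with $w(t):=\langle f'(a+th),h\rangle$ and $\rho$ the affine (hence $C^1$) function $\rho(t):=-\langle f'(a),h\rangle-tc$, the sum rule for the Clarke subdifferential (\cite[Proposition 2.3.3]{Clarke2}) gives $\partial(g')(s)\subset\partial w(s)+\{\rho'(s)\}=\partial w(s)-c$, whence $c\in\partial w(s)$.

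It remains to relate $\partial w(s)$ to the second–order subdifferential at $\xi:=a+sh$, which lies in $(a,b)$ because $s\in(0,1)$. Set $\phi:=\langle f'(\,\cdot\,),h\rangle$; this function is locally Lipschitz near $\xi$ and, by Definition \ref{def-second-order-subdiff}, $\partial\phi(\xi)=\partial^2 f(\xi)(h)$. Since $w(t)=\phi(a+th)$, restricting the difference quotients defining the Clarke directional derivative to the line through $\xi$ in direction $h$ gives $w^\circ(s;1)\le\phi^\circ(\xi;h)$ and $w^\circ(s;-1)\le\phi^\circ(\xi;-h)$; therefore every $v\in\partial w(s)$ satisfies
$$\min_{L\in\partial^2 f(\xi)(h)}\langle L,h\rangle\ \le\ v\ \le\ \max_{L\in\partial^2 f(\xi)(h)}\langle L,h\rangle .$$
By Proposition \ref{prop2.1}(i) the set $\partial^2 f(\xi)(h)$ is convex and $w^*$-compact, so $\{\langle L,h\rangle : L\in\partial^2 f(\xi)(h)\}$ is a compact interval; hence $v=\langle L,h\rangle$ for some $L\in\partial^2 f(\xi)(h)$. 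Applying this to $v=c$ and recalling the definition of $c$ gives the assertion.

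The genuinely routine parts are checking $g(0)=g(1)=0$, $g'(0)=0$, and that $g'$ is locally Lipschitz; the point needing real care is the last step, namely passing from the one–variable Clarke subdifferential of the restriction $w$ to the full subdifferential of $\phi$ in the direction $h$ — this is where the chain-rule–type inequalities $w^\circ(s;\pm1)\le\phi^\circ(\xi;\pm h)$ together with the convexity and $w^*$-compactness of $\partial^2 f(\xi)(h)$ are essential.
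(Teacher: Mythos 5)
Your proof is correct, and it follows the same overall strategy as the paper --- reduce to the segment $[a,b]$, obtain a one--dimensional second--order mean value statement for $t\mapsto\langle f'(a+th),h\rangle$, and then transfer back to $\partial^2 f(\xi)(h)$ --- but it replaces both of the paper's external ingredients with self-contained arguments. Where the paper simply quotes the one--dimensional lemma of Georgiev--Zlateva (\cite[Proposition 1.14]{Pando}), which asserts $\phi(1)-\phi(0)-\phi'(0)\in\frac12\partial\phi'(t_0)$ for $\phi\in C^{1,1}(I)$, you reprove it by subtracting the quadratic $\tfrac{t^2}{2}c$, applying Rolle to get a zero of $g'$, and then Lebourg's theorem plus the sum rule to conclude $c\in\partial w(s)$; and where the paper invokes Clarke's chain rule (\cite[Theorem 2.3.10]{Clarke2}) to identify $\partial\phi'(t_0)$ with $\partial^2 f(\xi)(h)(h)$, you prove the needed inclusion directly via the inequalities $w^\circ(s;\pm1)\le\phi^\circ(\xi;\pm h)$ together with the fact (Proposition \ref{prop2.1}(i)) that $\{\langle L,h\rangle : L\in\partial^2 f(\xi)(h)\}$ is a compact interval whose endpoints are $-\phi^\circ(\xi;-h)$ and $\phi^\circ(\xi;h)$. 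All the individual steps check out: $g(0)=g(1)=g'(0)=0$ by the choice of $c$, the factor $\tau>0$ lets you cancel in Lebourg's conclusion $v\tau=0$, and $\xi=a+sh$ does lie in $(a,b)$. What your route buys is independence from the two cited results (in particular you only ever need the easy inclusion direction of the chain rule, which you establish by hand); what it costs is length --- the paper's proof is three lines once the two references are granted.
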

\begin{proof} For the proof we need the following lemma.
    \begin{lemma}[{see \cite[Proposition 1.14]{Pando}}] Let $I$ be an open
        interval containing $[0, 1]$ and $\phi\in C^{1, 1}(I)$. Then, there
        exits $t_0\in (0, 1)$ such that
        \begin{equation}\label{1}
        \phi(1)-\phi(0) -\phi'(0)\in \frac{1}2\partial \phi'(t_0).
        \end{equation}
    \end{lemma}
    We  now  define a function $\phi(t):= f(a +th)$, $t\in [0, 1]$ with
    $h:=b-a$. It is clear that $\phi$ satisfies properties of the above
    lemma. Therefore, there exists $t_0\in (0, 1)$ such that
    \eqref{1} is satisfied. Since $\phi'(t)= \langle f'(a+th),
    h\rangle$, the chain rule (see \cite[Theorem 2.3.10]{Clarke2})
    implies that
    \begin{align}\notag
    \partial \phi'(t_0)= \partial \langle f'(\,\cdot\,),
    h\rangle(a+t_0h)(h)=\partial \langle f'(\,\cdot\,),h\rangle(\xi)(h)=\partial^2f(\xi)(h)(h),
    \end{align}where $\xi= a+ t_0(b-a) \in (a, b)$. Hence, there exists
    $L\in \partial^2f(\xi)(b-a)$ such that
    \begin{align*}
    f(b)-f(a)-\langle f'(a), b-a\rangle=\phi(1)-\phi(0)
    -\phi'(0)=\frac{1}2\langle L, b-a\rangle.
    \end{align*}
We obtained the desired conclusion of Theorem 2.1.
\end{proof}
Let $Y$ be a Banach space and $H\colon D\to Y$ be a mapping defined on $D$. We say that $H$ is {\it strictly Fr\'echet differentiable} at $\hat x\in D$,
if there exists a linear continuous mapping $H'(\hat x)\colon X\to Y$ such
that for all $\varepsilon>0$, there exists $\delta>0$ with
$$\|H(u)-H(v)-\langle H'(\hat x),u-v\rangle\|\leq \varepsilon\| u-v\|
$$
whenever $u$ and $v$ satisfy $\|u-\hat x\|<\delta$ and $\|v-\hat x\|<\delta$. It is easy to see that
$$\langle H'(\hat x),d\rangle=\lim_{x\to \hat x\atop{\varepsilon\to 0^+}}\frac{H(x+\varepsilon d)-H(x)}{\varepsilon}
$$
holds for all $d\in X$.

According to \cite{Pales1}, when  $H$ is strictly Fr\'echet
differentiable at $\hat x$ and $d\in X$, then the {\em second-order weak directional derivative}  of $H$ at $\hat x$ in the direction $d$  is defined by
$$H''(\hat x; d):=\bigg\{y\in Y\,|\, \liminf_{\varepsilon\to 0^+}\Big\| y-2\frac{H(\hat x+\varepsilon d)-H(\hat x) -\varepsilon \langle H'(\hat x), d\rangle}{\varepsilon^2} \Big\|=0\bigg\}.
$$ In other words, using the concept of the sequential
Painlev\'e--Kuratowski upper limit of \cite{Au}, we have
$$
H''(\hat x; d)=\Limsup_{\varepsilon\to 0^+}\Big[ 2\frac{H(\hat x+\varepsilon d)-H(\hat x) -\varepsilon \langle H'(\hat x), d\rangle}{\varepsilon^2}\Big].
$$
This set may be empty. If it is nonempty, then we say that $H$ is twice weakly directionally differentiable at $\hat x$ in the direction $d$.  It is clear that when $H$ is of class $C^2$, then
$$H''(\hat x; d)=\{ H''(\hat x)(d)(d)\}.$$

Now we compare the second-order weak directional derivative with the second-order subdifferential in the sense of Definition \ref{def-second-order-subdiff}.
\begin{proposition}\label{relative-second-order-derivatives} Let $H$ be a real-valued function defined on $D$ and $\hat x\in D$. If $H\in C^{1,1}(D)$,  then $H$ is twice weakly directionally differentiable at $\hat x$ in the any direction $d\in X$ and $H''(\hat x; d)\subset \partial^2 H(\hat x)(d)(d)$.
\end{proposition}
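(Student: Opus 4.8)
The plan is to read the difference quotient off the mean value theorem, Theorem~\ref{mean-th}, and then pass to the limit using the properties collected in Proposition~\ref{prop2.1}. Fix $d\in X$. Since $D$ is open, $[\hat x,\hat x+\varepsilon d]\subset D$ for all sufficiently small $\varepsilon>0$, so Theorem~\ref{mean-th} applied with $a=\hat x$, $b=\hat x+\varepsilon d$ gives $\xi_\varepsilon\in(\hat x,\hat x+\varepsilon d)$ and $L_\varepsilon\in\partial^2 H(\xi_\varepsilon)(\varepsilon d)$ with
$$
H(\hat x+\varepsilon d)-H(\hat x)-\varepsilon\langle H'(\hat x),d\rangle=\tfrac12\langle L_\varepsilon,\varepsilon d\rangle .
$$
By the positive homogeneity in Proposition~\ref{prop2.1}(iv), $\partial^2 H(\xi_\varepsilon)(\varepsilon d)=\varepsilon\,\partial^2 H(\xi_\varepsilon)(d)$, hence $M_\varepsilon:=\varepsilon^{-1}L_\varepsilon\in\partial^2 H(\xi_\varepsilon)(d)$, and after dividing the identity above by $\varepsilon^2$ one obtains
$$
2\,\frac{H(\hat x+\varepsilon d)-H(\hat x)-\varepsilon\langle H'(\hat x),d\rangle}{\varepsilon^2}=\langle M_\varepsilon,d\rangle ,\qquad M_\varepsilon\in\partial^2 H(\xi_\varepsilon)(d),\ \ \|\xi_\varepsilon-\hat x\|\le\varepsilon\|d\| .
$$

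Next, by Proposition~\ref{prop2.1}(i) (equivalently, by the local Lipschitz property of $H'$), the elements $M_\varepsilon$ are uniformly bounded in norm for $\varepsilon$ small, so $\langle M_\varepsilon,d\rangle$ is a bounded family of reals as $\varepsilon\to0^+$ and therefore has cluster points; in particular $H''(\hat x;d)\neq\emptyset$, i.e.\ $H$ is twice weakly directionally differentiable at $\hat x$ in the direction $d$. Now let $y\in H''(\hat x;d)$, so $y=\lim_k\langle M_{\varepsilon_k},d\rangle$ for some $\varepsilon_k\to0^+$. Since norm-bounded sequences in $X^*$ admit weak$^*$-convergent subsequences (our standing assumption on $B^*$), we may assume, after relabeling, that $M_{\varepsilon_k}\xrightarrow{w^*}M$ for some $M\in X^*$. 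Because $\xi_{\varepsilon_k}\to\hat x$ and $M_{\varepsilon_k}\in\partial^2 H(\xi_{\varepsilon_k})(d)$, the upper semicontinuity in Proposition~\ref{prop2.1}(ii) yields $M\in\partial^2 H(\hat x)(d)$. Finally, weak$^*$ convergence tested against the fixed vector $d$ gives $\langle M_{\varepsilon_k},d\rangle\to\langle M,d\rangle$, whence $y=\langle M,d\rangle\in\partial^2 H(\hat x)(d)(d)$. This is the asserted inclusion.

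The proof is short because the substance is already in Theorem~\ref{mean-th} and Proposition~\ref{prop2.1}; the few points needing care are keeping the segment $[\hat x,\hat x+\varepsilon d]$ inside $D$ (immediate from openness), using Proposition~\ref{prop2.1}(iv) to absorb the factor $\varepsilon$ so that the $\varepsilon^{-2}$ scaling produces exactly $\langle M_\varepsilon,d\rangle$, and matching the weak$^*$ cluster point $M$ of $(M_{\varepsilon_k})$ with the real cluster point $y$ of the quotient, which is automatic since evaluation at the fixed $d$ is weak$^*$-continuous. I would also note that real-valuedness of $H$ is precisely what makes boundedness of $\langle M_\varepsilon,d\rangle$ enough to produce cluster points; for a vector-valued target one would additionally have to control the remainder in $Y$, so this exact statement should not be expected to carry over verbatim.
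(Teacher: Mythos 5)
Your argument is correct and follows essentially the same route as the paper's own proof: apply the mean value theorem (Theorem~\ref{mean-th}) on $[\hat x,\hat x+\varepsilon d]$, rescale by positive homogeneity to identify the second-order difference quotient with $\langle M_\varepsilon,d\rangle$ for $M_\varepsilon\in\partial^2H(\xi_\varepsilon)(d)$, and then use local boundedness, weak$^*$ sequential compactness of bounded sets in $X^*$, and the upper semicontinuity from Proposition~\ref{prop2.1}(ii) to pass to the limit. If anything, your treatment of the inclusion $H''(\hat x;d)\subset\partial^2H(\hat x)(d)(d)$ is slightly more explicit than the paper's, which only says ``as in the proof of the first assertion'' where you spell out the subsequence extraction and the matching of the weak$^*$ cluster point with $y$.
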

\begin{proof} Let $d\in X$ and $\varepsilon_n$ be an arbitrary positive sequence converging to $0$ as $n\to \infty$.   For each $n\in\mathbb{N}$, by Theorem \ref{mean-th}, there exist $t_n\in(0,1)$  and $L_n\in \partial^2H(\hat x+t_n\varepsilon_nd)(d)$ such that
    $$H(\hat x +\varepsilon_n d)- H(\hat x)- \varepsilon_n \langle H'(\hat x), d\rangle=\frac{1}2 \varepsilon_n^2\langle L_n,
    d\rangle.
    $$
    By Proposition \ref{prop2.1}, we can
    assume that $L_n$ converges weakly$^*$ to $L\in \partial^2 f(\hat x)(d)$.  This implies that
    $$\lim\limits_{n\to\infty}2\dfrac{H(\hat x +\varepsilon_n d)- H(\hat x)- \varepsilon_n \langle H'(\hat x), d\rangle}{\varepsilon^2_n}=\langle L, d\rangle.$$
    Thus, $\langle L, d\rangle\in H''(\hat x; d)$ and $H''(\hat x; d)$ is nonempty.

    To prove the second assertion, fix $y\in H''(\hat x; d)$. Then there exists a positive sequence $\varepsilon_n$ converging to $0$ such that
    $$\lim\limits_{n\to\infty}2\dfrac{H(\hat x +\varepsilon_n d)- H(\hat x)- \varepsilon_n \langle H'(\hat x), d\rangle}{\varepsilon^2_n}=y.$$
    For the sequence $\varepsilon_n$, as in the proof of the first assertion, there is  $L\in \partial^2 f(\hat x)(d)$ such that
    $$\lim\limits_{n\to\infty}2\dfrac{H(\hat x +\varepsilon_n d)- H(\hat x)- \varepsilon_n \langle H'(\hat x), d\rangle}{\varepsilon^2_n}=\langle L, d\rangle.$$
    This implies that $y=\langle L, d\rangle$ and we therefore get $H''(\hat x; d)\subset \partial^2 H(\hat x)(d)(d)$.
\end{proof}

The following result is immediate from the definition of the second-order weak directional derivative and Proposition \ref{relative-second-order-derivatives}.
\begin{corollary}\label{corollary27} Let $H:=(h_1, \ldots, h_p)\colon D\to \mathbb{R}^p$ be a vector-valued function  and $\hat x\in D$. If $h_i\in C^{1,1}(D)$ for all $i=1, \ldots, p$,  then $H$ is twice weakly directionally differentiable at $\hat x$ in the any direction $d\in X$ and
    $$H''(\hat x; d)\subset h_1''(\hat x; d)\times \ldots h_p''(\hat x; d) \subset  \partial^2 h_1(\hat x)(d)(d) \times\ldots\times \partial^2 h_p(\hat x)(d)(d).$$
\end{corollary}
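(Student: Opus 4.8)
The plan is to deduce the corollary directly from Proposition \ref{relative-second-order-derivatives} applied coordinatewise, so essentially no new work is needed beyond unwinding definitions. First I would establish the left-hand inclusion $H''(\hat x; d)\subset h_1''(\hat x; d)\times\cdots\times h_p''(\hat x; d)$. Fix $d\in X$ and take any $y=(y_1,\ldots,y_p)\in H''(\hat x; d)$. By the sequential Painlev\'e--Kuratowski characterization, there is a positive sequence $\varepsilon_n\to 0^+$ with
$$2\frac{H(\hat x+\varepsilon_n d)-H(\hat x)-\varepsilon_n\langle H'(\hat x),d\rangle}{\varepsilon_n^2}\to y$$
in $\mathbb{R}^p$. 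Since convergence in $\mathbb{R}^p$ is convergence in each coordinate, for every $i$ we get
$$2\frac{h_i(\hat x+\varepsilon_n d)-h_i(\hat x)-\varepsilon_n\langle h_i'(\hat x),d\rangle}{\varepsilon_n^2}\to y_i,$$
so $y_i\in h_i''(\hat x; d)$ by definition. Hence $y\in\prod_{i=1}^p h_i''(\hat x; d)$, giving the first inclusion; in particular, since each $h_i\in C^{1,1}(D)$, Proposition \ref{relative-second-order-derivatives} guarantees each $h_i''(\hat x;d)$ is nonempty, but more is needed for $H''(\hat x;d)\neq\emptyset$ (see below).

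Next I would prove nonemptiness of $H''(\hat x; d)$. Unlike the scalar case, a product of nonempty limsup sets need not be realized along a common sequence, so the argument of Proposition \ref{relative-second-order-derivatives} must be run simultaneously for all coordinates. Pick an arbitrary $\varepsilon_n\to 0^+$. For each fixed $i$, Theorem \ref{mean-th} (or directly Proposition \ref{relative-second-order-derivatives}) combined with the $w^*$-sequential compactness of $\partial^2 h_i(\hat x+t\varepsilon_n d)(d)$ (Proposition \ref{prop2.1}(i)) lets me pass to a subsequence along which the $i$-th difference quotient converges; doing this successively for $i=1,\ldots,p$ through $p$ nested subsequence extractions produces a single subsequence $\varepsilon_{n_k}$ along which all $p$ coordinates converge, say to $y_i=\langle L_i,d\rangle$ with $L_i\in\partial^2 h_i(\hat x)(d)$. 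Then $y=(y_1,\ldots,y_p)\in H''(\hat x; d)$, so the latter is nonempty, i.e.\ $H$ is twice weakly directionally differentiable at $\hat x$ in the direction $d$.

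Finally, the right-hand inclusion $\prod_i h_i''(\hat x; d)\subset\prod_i\partial^2 h_i(\hat x)(d)(d)$ is immediate: apply the inclusion $h_i''(\hat x; d)\subset\partial^2 h_i(\hat x)(d)(d)$ from Proposition \ref{relative-second-order-derivatives} to each factor. Chaining the two inclusions yields the claimed chain, completing the proof. The only genuinely delicate point is the nested-subsequence extraction establishing nonemptiness; everything else is a coordinatewise restatement of the scalar result, so I would keep that part terse and spend a sentence making the iterated extraction explicit.
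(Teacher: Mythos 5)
Your proof is correct and consistent with the paper, which states this corollary without proof, calling it immediate from the definition and Proposition \ref{relative-second-order-derivatives}; your argument supplies exactly the details that the paper leaves implicit. In particular, you correctly identify the one point that is not purely formal --- nonemptiness of $H''(\hat x;d)$ requires a single sequence $\varepsilon_{n_k}\to 0^+$ along which all $p$ coordinate difference quotients converge simultaneously, not merely nonemptiness of each $h_i''(\hat x;d)$ --- and your nested subsequence extraction (using the local boundedness from Proposition \ref{prop2.1} and the standing weak$^*$-sequential compactness assumption, exactly as in the proof of Proposition \ref{relative-second-order-derivatives}) handles it properly.
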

\subsection{Second-order variations}
In this section, we recall some concepts related to second-order variations from \cite{Dubovitskii65,Pales1}.
\begin{definition}{\rm Let $f$ be a real-valued function defined on  $D$. A vector $\bar w\in X$ is called a {\em second-order descent
variation} of $f$ at $\hat x\in D$ in the direction $d$ if there exists an
$\bar \varepsilon>0$ such that $\hat x +\varepsilon d +\varepsilon^2(\bar w +
w)\in  D$ and
$$ f(\hat x +\varepsilon d +\varepsilon^2(\bar w +
w)) < f(\hat x)$$ for all $\varepsilon\in (0, \bar\varepsilon)$ and
$\|w\|<\bar\varepsilon$. The set of such $\bar w$ is denoted by
$W^2_{\delta}(f; \hat x, d)$. This set is always open.}
\end{definition}
Let $\Omega$ be a nonempty subset in $X$, $\hat x\in \Omega$ and $d\in X$.
\begin{definition}{\rm  A vector $\bar w\in X$ is said to be a {\em second-order
admissible variation} of $\Omega$ at $\hat x$ in the direction
$d$ if there exists an $\bar \varepsilon>0$ such that
$$\hat x +\varepsilon d +\varepsilon^2(\bar w + w)\in  \Omega
$$ for all $\varepsilon\in (0, \bar\varepsilon)$ and
$\|w\|<\bar\varepsilon$. We denote this set by
$W_{\alpha}^2(\Omega; \hat x, d)$, which is always open.}
\end{definition}

\begin{definition}{\rm The {\em second-order tangent variation set }  of  $\Omega$ at $\hat x$ in the direction $d$ is the set $W_{\tau}^2(\Omega; \hat x, d)$ of vectors $\bar w\in X$ such that there exist sequences  $\varepsilon_n\to 0^+$ and  $w_n\to 0$ satisfying
    $$\hat x +\varepsilon_n d +\varepsilon_n^2(\bar w + w_n)\in  \Omega \ \ \text{for all } \ \ n\in \mathbb{N}.
    $$
}
\end{definition}
\begin{remark}{\rm
\begin{enumerate} [{\rm (i)}]
    \item  Denote by $d_\Omega(x)$ the distance of $x$ from $\Omega$; then the set of all second-order tangent variations of $\Omega$ at $\hat x$ in the direction $d$ can be formulated as follows:
\begin{equation*}
W_{\tau}^2(\Omega; \hat x, d)=\Big\{\bar w\,|\, \liminf_{\varepsilon\to 0^+}\dfrac{d_\Omega(\hat x+\varepsilon d+\varepsilon^2\bar w)}{\varepsilon^2}=0\Big\}.
\end{equation*}
\item   It is easy to check that
\begin{equation*}
W_{\alpha}^2(X\setminus \Omega; \hat x, d)=X\setminus W_{\tau}^2(\Omega; \hat x, d).
\end{equation*}
\item   Suppose that $f$ is a real-valued function defined on $D$ and $\hat x\in D$. Then we have
\begin{equation*}
W^2_{\delta}(f; \hat x, d)=W^2_{\alpha}(\Omega; \hat x, d) \ \ \text{for all } \ \ d\in X,
\end{equation*}
where $\Omega:=\{x\in D\,|\, f(x)<f(\hat x)\}$.
\end{enumerate}
}
\end{remark}

\medskip

The following result gives a sufficient condition for a vector $w$ to be a second-order descent variation of a given $C^{1,1}$ function on $D$.
\begin{proposition}\label{prop2.2} Suppose that $f\in C^{1, 1}(D)$, $\hat x\in D$ and $d\in X$ satisfying $\langle f'(\hat x),  d\rangle \leq
0$. Denote
$$W_f=\bigg\{w\in X\,|\, \langle f'(\hat x),   w\rangle +\frac{1}2 \sup_{L\in\partial^2 f(\hat x)(d)}
\langle L,  d\rangle <0\bigg\}.$$ Then, $W_f$ is an open and convex set, and the  following inclusion  holds true
\begin{equation}\label{equ:1}
W_f\subseteq
W^2_{\delta}(f; \hat x, d).
\end{equation}
\end{proposition}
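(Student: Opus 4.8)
The plan is to establish the two structural claims about $W_f$ first, and then obtain the inclusion \eqref{equ:1}. \emph{Openness} of $W_f$ is almost immediate: the functional $w\mapsto \langle f'(\hat x), w\rangle$ is continuous and linear, while the second summand $\tfrac12\sup_{L\in\partial^2 f(\hat x)(d)}\langle L, d\rangle$ is a constant (it is finite because $\partial^2 f(\hat x)(d)$ is $w^*$-compact by Proposition \ref{prop2.1}(i), so the supremum is attained and finite), hence $W_f$ is the preimage of an open half-line under a continuous affine map. \emph{Convexity} follows for the same reason, since a sublevel set of a continuous affine functional is convex.

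For the inclusion, fix $w\in W_f$; I want to produce $\bar\varepsilon>0$ so that $f(\hat x+\varepsilon d+\varepsilon^2(w+u))<f(\hat x)$ for all $\varepsilon\in(0,\bar\varepsilon)$ and $\|u\|<\bar\varepsilon$. The natural tool is the mean value theorem, Theorem \ref{mean-th}: writing $b=\hat x+\varepsilon d+\varepsilon^2(w+u)$ and $a=\hat x$, provided the segment $[a,b]$ lies in $D$ (which holds once $\varepsilon$ is small, since $D$ is open and $b\to\hat x$ uniformly in $\|u\|<\bar\varepsilon$), there exist $\xi\in(a,b)$ and $L\in\partial^2 f(\xi)(b-a)$ with
\begin{equation*}
f(b)-f(a)=\langle f'(\hat x),\, b-a\rangle+\tfrac12\langle L,\, b-a\rangle.
\end{equation*}
Here $b-a=\varepsilon d+\varepsilon^2(w+u)$. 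The first term expands as $\varepsilon\langle f'(\hat x),d\rangle+\varepsilon^2\langle f'(\hat x),w+u\rangle\le \varepsilon^2\langle f'(\hat x),w+u\rangle$ using the hypothesis $\langle f'(\hat x),d\rangle\le 0$. For the second term I use positive homogeneity and the bilinear identification: $L\in\partial^2 f(\xi)(\varepsilon d+\varepsilon^2(w+u))$, so by Proposition \ref{prop2.1}(iv), $\tfrac1\varepsilon L\in\partial^2 f(\xi)(d+\varepsilon(w+u))$, and $\langle L,b-a\rangle=\varepsilon^2\langle \tfrac1\varepsilon L,\, d+\varepsilon(w+u)\rangle$. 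Thus
\begin{equation*}
\frac{f(b)-f(a)}{\varepsilon^2}\le \langle f'(\hat x),w+u\rangle+\tfrac12\big\langle \tfrac1\varepsilon L,\; d+\varepsilon(w+u)\big\rangle,
\end{equation*}
and it remains to show the right-hand side is strictly negative for all small $\varepsilon$ and $u$.

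The main obstacle is controlling the term $\tfrac12\langle \tfrac1\varepsilon L, d+\varepsilon(w+u)\rangle$ and showing its $\limsup$ as $\varepsilon\to0^+$, $u\to0$ is at most $\tfrac12\sup_{L'\in\partial^2 f(\hat x)(d)}\langle L',d\rangle$. I would argue by contradiction through sequences: suppose there were $\varepsilon_n\to0^+$, $u_n\to0$ and points $\xi_n\in(\hat x,\hat x+\varepsilon_n d+\varepsilon_n^2(w+u_n))$ with $L_n\in\partial^2 f(\xi_n)(d+\varepsilon_n(w+u_n))$ violating the bound. Since $\xi_n\to\hat x$ and the arguments $d+\varepsilon_n(w+u_n)\to d$, local boundedness in Proposition \ref{prop2.1}(ii) (the Lipschitz bound $\|L_n\|\le l\|d+\varepsilon_n(w+u_n)\|$ from part (i)) lets me extract a $w^*$-convergent subsequence $L_n\xrightarrow{w^*}L'$; the upper semicontinuity in part (ii), together with the fact that the arguments converge to $d$, forces $L'\in\partial^2 f(\hat x)(d)$. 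Then $\langle L_n, d+\varepsilon_n(w+u_n)\rangle\to\langle L',d\rangle\le\sup_{\partial^2 f(\hat x)(d)}\langle\cdot,d\rangle$, contradicting the supposed violation. Here I should double-check that Proposition \ref{prop2.1}(ii) is stated for fixed direction, so a minor extra step—or a direct $\limsup$ estimate using the Lipschitz continuity of $f'$ to replace $f'$ at $\xi_n$ by $f'$ at $\hat x$—handles the varying argument $d+\varepsilon_n(w+u_n)$. Combining, for $n$ large we get $\langle f'(\hat x),w+u_n\rangle+\tfrac12\langle L_n,d+\varepsilon_n(w+u_n)\rangle$ arbitrarily close to $\langle f'(\hat x),w\rangle+\tfrac12\sup_{L'\in\partial^2 f(\hat x)(d)}\langle L',d\rangle<0$, which is the definition of $w\in W_f$. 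Hence $f(b)<f(\hat x)$ for all small $\varepsilon$ and $u$, i.e.\ $w\in W^2_\delta(f;\hat x,d)$, completing the proof.
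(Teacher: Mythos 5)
Your argument is correct, but it takes a genuinely different route from the paper's. The paper argues by contradiction from the negation of membership in $W^2_{\delta}(f;\hat x,d)$ and, crucially, inserts the intermediate point $\hat x+\varepsilon_n d$: it splits $f(\hat x+\varepsilon_n d+\varepsilon_n^2(\bar w+w_n))-f(\hat x)$ into the increment from $\hat x$ to $\hat x+\varepsilon_n d$, handled by Theorem \ref{mean-th} along the \emph{fixed} direction $d$ (producing $H_n\in\partial^2 f(\hat x+t_n\varepsilon_n d)(d)$), plus the increment from $\hat x+\varepsilon_n d$ to the full point, handled by first-order differentiability alone (converging to $\langle f'(\hat x),\bar w\rangle$ after division by $\varepsilon_n^2$). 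That decomposition is precisely what lets the paper invoke Proposition \ref{prop2.1}(ii) verbatim, since the direction inside the second-order subdifferential never moves. You instead apply Theorem \ref{mean-th} to the whole displacement $\varepsilon d+\varepsilon^2(w+u)$ and rescale, which leaves you with $L\in\partial^2 f(\xi)\bigl(d+\varepsilon(w+u)\bigr)$, a subdifferential taken at a \emph{varying} direction; you correctly flag that Proposition \ref{prop2.1}(ii) as stated does not cover this. The step you flag is real but closable with the paper's own tools: writing $\langle f'(\,\cdot\,),d_n\rangle=\langle f'(\,\cdot\,),d\rangle+\langle f'(\,\cdot\,),d_n-d\rangle$ with $d_n:=d+\varepsilon_n(w+u_n)$ and using the sum rule of Proposition \ref{prop2.1}(iv) together with the Lipschitz bound from part (i), one gets $L_n=A_n+B_n$ with $A_n\in\partial^2 f(\xi_n)(d)$ and $\|B_n\|\le l\|d_n-d\|\to 0$, after which part (ii) applies to $A_n$ and yields the needed weak$^*$ limit in $\partial^2 f(\hat x)(d)$. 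With that spelled out, your proof of \eqref{equ:1} is complete. The trade-off is clear: the paper's intermediate point keeps every second-order object at the fixed direction $d$ at the cost of one extra first-order limit, while your single application of the mean value theorem is more direct but forces a small joint upper-semicontinuity argument in $(x,d)$ that the paper never needs.
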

\begin{proof} Clearly, $W_f$ is an open and convex set. We now prove inclusion \eqref{equ:1}. The proof is indirect. Assume the opposite, i.e., there exists $\bar w\in W_f$ but $\bar w\notin W^2_{\delta}(f; \hat x, d)$. Then, for each $n\in\mathbb{N}$, there exist $\varepsilon_n\in (0, \frac{1}{n})$ and $w_n\in X$ with $\|w_n\|<\frac{1}{n}$ such that at least one of the following relations
\begin{align*}
& \hat x+ \varepsilon_n d +\varepsilon^2_n (\bar w + w_n)\in D,\\
&f(\hat x +\varepsilon_n d +\varepsilon^2_n(\bar w +w_n))< f(\hat x),
\end{align*} does not hold. For each $n\in\mathbb{N}$, put $x_n=\hat x+ \varepsilon_n d +\varepsilon^2_n (\bar w + w_n)$.  Clearly, the sequence  $\{x_n\}$  converges to $\hat x$ as $n\to\infty$. From the openness of $D$ it follows that $x_n\in D$ for all $n$ sufficient large. Thus, without loss of generality, we may assume that
 \begin{equation*}
f(\hat x +\varepsilon_n d +\varepsilon^2_n(\bar w +w_n))\geq  f(\hat x), \ \ \forall n\in\mathbb{N}.
\end{equation*}
This implies that
\begin{align}\label{b}
\langle f'(\hat x),  d\rangle &+\varepsilon_n\Big[ \frac{f(\hat x +
\varepsilon_n d +\varepsilon_n^2(\bar w + w_n))- f(\hat x + \varepsilon_n
d)}{\varepsilon_n^2}\Big] \notag\\
&\ \ \ \ \ \ \ \  +\varepsilon_n \Big[\frac{f(\hat x +\varepsilon_n d)- f(\hat x)-\varepsilon_n
\langle f'(\hat x),  d\rangle }{\varepsilon_n^2}\Big] \geq 0, \ \ \forall n\in\mathbb{N}.
\end{align}
By Theorem \ref{mean-th}, for each $n\in\mathbb{N}$, there exist $t_n\in(0,1)$  and
\begin{equation}\label{c}
L_n\in\partial^2 f(\hat x +t_n\varepsilon_n d)(\varepsilon_n d)=
\varepsilon_n\partial^2 f(\hat x +t_n\varepsilon_n d)(d)
\end{equation} such that
$$f(\hat x +\varepsilon_n d)- f(\hat x)- \varepsilon_n f'(\hat x; d)=\frac{1}2 \langle L_n, \varepsilon_n d\rangle, \ \ \forall n\in\mathbb{N}.$$
By \eqref{c}, $L_n=\varepsilon_n H_n$ for some $H_n\in
\partial^2 f(\hat x +t_n\varepsilon_n d)(d)$  and so
$$f(\hat x +\varepsilon_n d)- f(\hat x)- \varepsilon_n \langle f'(\hat x), d\rangle=\frac{1}2 \varepsilon_n^2\langle H_n,
d\rangle.
$$ It follows that
$$
\frac{f(\hat x +\varepsilon_n d)- f(\hat x)-\varepsilon_n f'(\hat x;
d)}{\varepsilon_n^2}= \frac{1}2 \langle H_n, d\rangle.
$$ Hence, by \eqref{b}, we have
\begin{equation}\label{d}
\langle f'(\hat x), d\rangle +\varepsilon_n\Big[ \frac{f(\hat x + \varepsilon_n d
+\varepsilon_n^2(\bar w + w_n))- f(\hat x + \varepsilon_n
d)}{\varepsilon_n^2}\Big] + \frac{1}{2}\varepsilon_n \langle H_n, d\rangle\geq 0.
\end{equation}  Since $\partial^2 f(\,\cdot\,)(d)$ is locally bounded near $\hat x$, we can
assume that $H_n$ converges weak$^*$ to $H_0$. By the upper semicontinuity of
the mapping $\partial^2 f(\,\cdot\,)$, we have $H_0\in
\partial^2 f(\hat x)(d)$. Besides, one has
$$\lim_{n\to\infty}\Big[\frac{f(\hat x + \varepsilon_n d
+\varepsilon_n^2(\bar w + w_n))- f(\hat x + \varepsilon_n
d)}{\varepsilon_n^2}\Big]= \langle f'(\hat x),  \bar w\rangle.
$$ Letting $n\to \infty$ in \eqref{d}  we obtain $\langle f'(\hat x), d\rangle\geq 0.$ Combining this with assumptions of the proposition,  we get $\langle f'(\hat x), d\rangle=0$. Substituting $\langle f'(\hat x), d\rangle=0$ into \eqref{d} and dividing
two sides by $\varepsilon_n>0$, we get
$$
\Big[ \frac{f(\hat x + \varepsilon_n d +\varepsilon_n^2(\bar w + w_n))-
f(\hat x + \varepsilon_n d)}{\varepsilon_n^2}\Big]+ \frac{1}{2}\langle H_n,
d\rangle\geq 0.
$$
Passing the limit, we obtain
$$
\langle f'(\hat x),  \bar w\rangle +\frac{1}2 \langle H_0,
d\rangle \geq 0,
$$
contrary to the fact that $\bar w\in W_f$. The proof is complete.
\end{proof}

The following result presents a characterization of the second-order tangent variation set to the null-set of a set-valued mapping between two general Banach spaces.
\begin{lemma} [{see \cite[Theorem 5]{Pales1}}]\label{lem3.4}
Assume that $H\colon D\to Y$ is strictly Fr\'echet differentiable
at $\hat x\in D$ such that $H'(\hat x)\colon X\to Y$ is surjective. Let $\Omega:=\{x\in X\,|\, H(x)=0\}$. Then $\bar
    w\in W^2_{\tau}(\Omega; \hat x, d)$ if and only if $\langle H'(\hat x), d\rangle=0$, $H$ is twice weakly directionally differentiable at $\hat x$ in the direction $d$ and the following condition holds:
    \begin{equation}\notag
    0\in \langle H'(\hat x), \bar w\rangle +\frac{1}2 H''(\hat x; d).
    \end{equation}
\end{lemma}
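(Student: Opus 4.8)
The plan is to prove the two implications separately, reading $W^2_\tau(\Omega;\hat x,d)$ through its distance-function description $\bar w\in W^2_\tau(\Omega;\hat x,d)\iff\liminf_{\varepsilon\to0^+}\varepsilon^{-2}d_\Omega(\hat x+\varepsilon d+\varepsilon^2\bar w)=0$ recorded in the Remark above, and using, for the harder direction, the metric regularity supplied by the surjectivity of $H'(\hat x)$.

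\emph{Necessity.} Assume $\bar w\in W^2_\tau(\Omega;\hat x,d)$: there are $\varepsilon_n\to0^+$ and $w_n\to0$ with $H\big(\hat x+\varepsilon_n d+\varepsilon_n^2(\bar w+w_n)\big)=0$. Applying the strict differentiability inequality to the pair $\hat x+\varepsilon_n d$ and $\hat x+\varepsilon_n d+\varepsilon_n^2(\bar w+w_n)$ (both near $\hat x$) and using $H(\hat x)=0$, one gets $H(\hat x+\varepsilon_n d)=-\varepsilon_n^2\langle H'(\hat x),\bar w\rangle+o(\varepsilon_n^2)$. Comparing with the first-order expansion $H(\hat x+\varepsilon_n d)=\varepsilon_n\langle H'(\hat x),d\rangle+o(\varepsilon_n)$ and dividing by $\varepsilon_n$ forces $\langle H'(\hat x),d\rangle=0$. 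Substituting this back, $2\varepsilon_n^{-2}\big(H(\hat x+\varepsilon_n d)-H(\hat x)-\varepsilon_n\langle H'(\hat x),d\rangle\big)\to-2\langle H'(\hat x),\bar w\rangle$, so $-2\langle H'(\hat x),\bar w\rangle$ lies in the Painlev\'e--Kuratowski upper limit, i.e. in $H''(\hat x;d)$; hence $H$ is twice weakly directionally differentiable at $\hat x$ in the direction $d$ and $0=\langle H'(\hat x),\bar w\rangle+\frac{1}{2}\big(-2\langle H'(\hat x),\bar w\rangle\big)\in\langle H'(\hat x),\bar w\rangle+\frac{1}{2}H''(\hat x;d)$.

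\emph{Sufficiency.} Conversely, suppose $\langle H'(\hat x),d\rangle=0$, $H$ is twice weakly directionally differentiable at $\hat x$ in the direction $d$, and $0\in\langle H'(\hat x),\bar w\rangle+\frac{1}{2}H''(\hat x;d)$. Choose $y\in H''(\hat x;d)$ with $\langle H'(\hat x),\bar w\rangle+\frac{1}{2}y=0$; then there is $\varepsilon_n\to0^+$ with $2\varepsilon_n^{-2}H(\hat x+\varepsilon_n d)\to y$, that is $\varepsilon_n^{-2}H(\hat x+\varepsilon_n d)\to-\langle H'(\hat x),\bar w\rangle$. Another use of strict differentiability at the pair $\hat x+\varepsilon_n d$, $\hat x+\varepsilon_n d+\varepsilon_n^2\bar w$ gives $H(\hat x+\varepsilon_n d+\varepsilon_n^2\bar w)=H(\hat x+\varepsilon_n d)+\varepsilon_n^2\langle H'(\hat x),\bar w\rangle+o(\varepsilon_n^2)=o(\varepsilon_n^2)$. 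Since $H$ is strictly Fr\'echet differentiable at $\hat x$ with $H'(\hat x)$ onto, the Lyusternik--Graves theorem yields metric regularity at $(\hat x,0)$: there are $\kappa>0$ and a neighbourhood $U$ of $\hat x$ with $d_\Omega(x)\le\kappa\|H(x)\|$ on $U$. Applied at $x_n:=\hat x+\varepsilon_n d+\varepsilon_n^2\bar w$ this gives $d_\Omega(x_n)\le\kappa\|H(x_n)\|=o(\varepsilon_n^2)$, so by the distance-function description $\bar w\in W^2_\tau(\Omega;\hat x,d)$; concretely, picking $\hat x_n\in\Omega$ with $\|x_n-\hat x_n\|=o(\varepsilon_n^2)$ and setting $w_n:=\varepsilon_n^{-2}(\hat x_n-\hat x-\varepsilon_n d)-\bar w$ produces the required sequences $\varepsilon_n\to0^+$, $w_n\to0$.

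\emph{Main obstacle.} The only genuinely nontrivial step is the sufficiency direction: the Taylor estimate gives merely $\|H(x_n)\|=o(\varepsilon_n^2)$ at the test points $x_n=\hat x+\varepsilon_n d+\varepsilon_n^2\bar w$, and one must upgrade this smallness of the residual into the existence of genuine zeros of $H$ within distance $o(\varepsilon_n^2)$ of $x_n$. This is exactly where surjectivity of $H'(\hat x)$ is indispensable, through the metric-regularity / open-mapping (Lyusternik--Graves) estimate; everything else is routine bookkeeping with the strict-differentiability inequality, and no appeal to Theorem \ref{mean-th} is needed here.
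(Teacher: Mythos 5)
Your proof is correct. Note that the paper itself does not prove Lemma~\ref{lem3.4} at all: it is imported verbatim from P\'ales--Zeidan \cite[Theorem 5]{Pales1}, so there is no in-paper argument to compare against; your route (strict-differentiability bookkeeping for necessity, and the Lyusternik--Graves/Graves open-mapping estimate to convert the residual bound $\|H(x_n)\|=o(\varepsilon_n^2)$ into nearby genuine zeros for sufficiency) is exactly the standard one and is in substance how the cited source proceeds, via their Lyusternik-type theorem. The only blemish is a wording slip in the necessity step: the cancellation that yields $H(\hat x+\varepsilon_n d)=-\varepsilon_n^2\langle H'(\hat x),\bar w\rangle+o(\varepsilon_n^2)$ uses $H(x_n)=0$ for the perturbed points $x_n=\hat x+\varepsilon_n d+\varepsilon_n^2(\bar w+w_n)\in\Omega$, not $H(\hat x)=0$ (the latter is what you need one line later for the first-order expansion); you also correctly observe that surjectivity is only needed for the sufficiency half.
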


\begin{definition}{\rm  Let $C\subset X$ be a nonempty convex set and $x\in C$.
\begin{enumerate}[{\rm(i)}]
    \item  The {\em normal cone} to $C$ at $x$ is the set defined by
    \begin{equation*}
    N(C; x):=\{x^*\in X^*\;|\; \langle x^*, y-x\rangle \leq 0,\ \ \forall
    y\in C\}.
    \end{equation*}
    \item  The {\em adjoint set} of $C$ is the set defined by
    \begin{equation*}
    C^+:=\{\varphi: X\to \mathbb{R}\;|\;\varphi\ \text{is affine and}\  \varphi (x) \geq 0, \ \ \forall x \in C
    \}.
    \end{equation*}
\end{enumerate}
}
\end{definition}

Let $Q\subset X$ be a closed convex set with nonempty interior. The interior of $Q$ is denoted by $Q^\circ$. Let $\hat x\in
Q$ and $d\in X$. We define the following set:  
$$Q^\circ(\hat x, d):=\bigcup_{\bar\varepsilon>0}\bigcap_{\varepsilon <\bar\varepsilon \atop{
        \|w\|<\bar\varepsilon}}\Big[\frac{1}{\varepsilon^2}(Q -\hat x-\varepsilon d)+w\Big].
$$
This set plays an important role in the description of the second-order necessary optimality condition for problem $(P)$; see \cite{Pales1,Pales1994} for more details. It is easy to see that $Q^\circ(\hat x, d)$ is an open convex set and $Q^\circ(\hat x, d)= W^2_{\alpha}(Q; \hat x, d).$ The nonemptyness of  $Q^\circ(\hat x, d)$ is an important fact. As shown in \cite{Pales1}, it is necessary in order that   $d\in \overline{\rm cone}\, (Q-\hat x)$.

The following proposition follows directly  from \cite[Lemma 3 and Theorem 4]{Pales1}.

\begin{lemma}\label{lem3.5} Let $Q\subset X$ be a closed convex set with
    nonempty interior,  $\hat x\in Q$, and $d\in X$. Denote
    $C:={\rm cone}({\rm cone}(Q^\circ -\hat x)-d).$
    Then
    \begin{enumerate}[{\rm(i)}]
        \item  $\overline{Q^\circ(\hat x, d)} + C\subset Q^\circ(\hat x, d)$;
        \item  $Q^\circ(\hat x, d)\subset C$. If in addition, $d\in {\rm cone}(Q-\hat x)$, then the inclusion is equality and we therefore get $Q^\circ(\hat x, d)\neq\emptyset$;
        \item  Let $d\in \overline{\rm{cone}}\, (Q-\hat x)$ and $\phi(\,\cdot\,):=-\langle x^*, \,\cdot\,\rangle+t$ be an affine function defined on $X$, where $x^*\in X^*$, $t\in \mathbb{R}$. Then, the function $\phi$ is bounded from below on $C$ if and only if $x^*\in N(Q;\hat x)$ and $x^*(d)=0$. Moreover,
        $$C^+=\left\{\phi(\,\cdot\,)=-\langle x^*, \,\cdot\,\rangle+t\;|\; x^*\in N(Q;\hat x),\; \langle x^*, d\rangle=0,\;t\geq 0\right\}.$$
    \end{enumerate}
\end{lemma}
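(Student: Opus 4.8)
The plan is to work throughout with the reformulation recorded just above the statement, namely $Q^\circ(\hat x,d)=W^2_\alpha(Q;\hat x,d)$: a vector $\bar w$ lies in $Q^\circ(\hat x,d)$ if and only if there is $\bar\varepsilon>0$ such that $\hat x+\varepsilon d+\varepsilon^2\bar w+u\in Q$ whenever $\varepsilon\in(0,\bar\varepsilon)$ and $\|u\|<\varepsilon^2\bar\varepsilon$; in particular $\hat x+\varepsilon d+\varepsilon^2\bar w\in Q^\circ$ for all such $\varepsilon$. I would first record the structural facts used repeatedly: $Q^\circ(\hat x,d)$ is open and convex; $C$ is a convex cone, being the positive hull of the convex set ${\rm cone}(Q^\circ-\hat x)-d$, and it is moreover open, since ${\rm cone}(Q^\circ-\hat x)-d$ is open (a translate of a union of dilations of the open set $Q^\circ-\hat x$). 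If $Q^\circ(\hat x,d)=\emptyset$ then (i) and (ii) are vacuous, so I may assume $Q^\circ(\hat x,d)\neq\emptyset$. The easy inclusion $Q^\circ(\hat x,d)\subseteq C$ in (ii) is then immediate: if $\bar w\in Q^\circ(\hat x,d)$, then $\hat x+\varepsilon d+\varepsilon^2\bar w\in Q^\circ$ for small $\varepsilon>0$, so $d+\varepsilon\bar w=\varepsilon^{-1}(\hat x+\varepsilon d+\varepsilon^2\bar w-\hat x)\in\varepsilon^{-1}(Q^\circ-\hat x)\subseteq{\rm cone}(Q^\circ-\hat x)$, hence $\varepsilon\bar w\in{\rm cone}(Q^\circ-\hat x)-d$ and $\bar w\in C$.

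The core of the argument is (i), and I would first prove $Q^\circ(\hat x,d)+C\subseteq Q^\circ(\hat x,d)$. Fix $\bar w_0\in Q^\circ(\hat x,d)$ with associated $\bar\varepsilon_0>0$, and a generator $c=s\bigl(t(q-\hat x)-d\bigr)$ of $C$, with $s,t>0$ and $q\in Q^\circ$ (the degenerate case $\hat x\in Q^\circ$ forces $Q^\circ(\hat x,d)=X$ and is trivial). For all sufficiently small $\varepsilon>0$ one has the algebraic identity
\begin{equation*}
\hat x+\varepsilon d+\varepsilon^2(\bar w_0+c+u)=(1-p_\varepsilon)\Bigl[\hat x+\eta_\varepsilon d+\eta_\varepsilon^2\,\theta(\varepsilon)(\bar w_0+u)\Bigr]+p_\varepsilon\,q ,
\end{equation*}
where $p_\varepsilon=\varepsilon^2 st$, $\eta_\varepsilon=\varepsilon(1-\varepsilon s)/(1-\varepsilon^2 st)$, $\theta(\varepsilon)=(1-\varepsilon^2 st)/(1-\varepsilon s)^2$, and as $\varepsilon\to 0^+$ one has $p_\varepsilon\to 0$, $\eta_\varepsilon\to 0$ with $\eta_\varepsilon/\varepsilon\to 1$, and $\theta(\varepsilon)\to 1$. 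Writing $\theta(\varepsilon)(\bar w_0+u)=\bar w_0+v$ with $v=(\theta(\varepsilon)-1)\bar w_0+\theta(\varepsilon)u$, I can choose $\bar\varepsilon_1\in(0,\bar\varepsilon_0]$ small enough that for $\varepsilon\in(0,\bar\varepsilon_1)$ and $\|u\|<\bar\varepsilon_1$ one has $p_\varepsilon\in[0,1)$, $\eta_\varepsilon\in(0,\bar\varepsilon_0)$ and $\|v\|<\bar\varepsilon_0$; then the bracket lies in $Q$ because $\bar w_0\in Q^\circ(\hat x,d)$, and the convex combination of that point with $q\in Q^\circ\subseteq Q$ again lies in $Q$. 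Hence $\bar w_0+c\in Q^\circ(\hat x,d)$, and since every element of $C$ is such a generator, $Q^\circ(\hat x,d)+C\subseteq Q^\circ(\hat x,d)$.

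To finish (i) I pass to the closure: let $\bar w\in\overline{Q^\circ(\hat x,d)}$ and $c\in C$; if $\bar w\in Q^\circ(\hat x,d)$ there is nothing to prove, so pick $w_0\in Q^\circ(\hat x,d)$, whence $(1-\lambda)\bar w+\lambda w_0\in Q^\circ(\hat x,d)$ for every $\lambda\in(0,1]$ by convexity, and since $C$ is open, $c+\lambda(\bar w-w_0)\in C$ for all small $\lambda>0$; then $\bar w+c=\bigl[(1-\lambda)\bar w+\lambda w_0\bigr]+\bigl[c+\lambda(\bar w-w_0)\bigr]\in Q^\circ(\hat x,d)+C\subseteq Q^\circ(\hat x,d)$, proving (i). For the remaining assertion of (ii), suppose $d\in{\rm cone}(Q-\hat x)$, say $d=\lambda(q_0-\hat x)$ with $q_0\in Q$, and fix $q_1\in Q^\circ$ with an open ball of radius $\rho>0$ about $q_1$ inside $Q$. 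For each $\mu>0$ the point $\hat x+\varepsilon d+\varepsilon^2\mu(q_1-\hat x)$ equals, for small $\varepsilon$, the convex combination $(1-\varepsilon\lambda-\varepsilon^2\mu)\hat x+\varepsilon\lambda q_0+\varepsilon^2\mu q_1$, which lies in $Q$ together with a ball of radius $\varepsilon^2\mu\rho$ that absorbs the $\varepsilon^2u$-perturbation; hence $\mu(q_1-\hat x)\in Q^\circ(\hat x,d)$. Letting $\mu\to 0^+$ gives $0\in\overline{Q^\circ(\hat x,d)}$, so by (i), $C=0+C\subseteq\overline{Q^\circ(\hat x,d)}+C\subseteq Q^\circ(\hat x,d)$; combined with the inclusion from the first paragraph, $C=Q^\circ(\hat x,d)\neq\emptyset$.

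For (iii), since $C$ is a cone, $\phi=-\langle x^*,\cdot\rangle+t$ is bounded below on $C$ if and only if $\langle x^*,\cdot\rangle$ is bounded above on $C$, equivalently $\langle x^*,c\rangle\le 0$ for all $c\in C$. Testing this on generators $c=s\bigl(t'(q-\hat x)-d\bigr)$ ($s,t'>0$, $q\in Q^\circ$): letting $t'\to\infty$ forces $\langle x^*,q-\hat x\rangle\le 0$ for all $q\in Q^\circ$, and since $Q^\circ$ is dense in $Q$ this gives $x^*\in N(Q;\hat x)$; letting $t'\to 0^+$ forces $\langle x^*,d\rangle\ge 0$; and since $x^*\in N(Q;\hat x)$ and $d\in\overline{\rm cone}(Q-\hat x)$, also $\langle x^*,d\rangle\le 0$, so $\langle x^*,d\rangle=0$. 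The converse is immediate, as then $\langle x^*,c\rangle=st'\langle x^*,q-\hat x\rangle\le 0$ on generators. Finally, an affine functional on $X$ has the form $\varphi=-\langle x^*,\cdot\rangle+t$, and $\varphi\ge 0$ on $C$ means $\langle x^*,c\rangle\le t$ on $C$; using that $C$ is a cone with $0\in\overline C$, this is equivalent to $\langle x^*,\cdot\rangle\le 0$ on $C$ together with $t\ge 0$, i.e.\ to $x^*\in N(Q;\hat x)$, $\langle x^*,d\rangle=0$ and $t\ge 0$, which is the asserted description of $C^+$. I expect the main obstacle to be the computation in (i): arranging the reparametrization so that the interior push $st(q-\hat x)$, the time shift $-sd$, and the admissible perturbation $\varepsilon^2u$ are controlled simultaneously and uniformly in $(\varepsilon,u)$; once $C$ is recognized as an open convex cone, the closure step in (i) and all of (iii) are routine.
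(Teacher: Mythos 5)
Your proof is correct, but note that the paper does not actually prove Lemma \ref{lem3.5}: it is stated as following directly from Lemma 3 and Theorem 4 of P\'ales--Zeidan \cite{Pales1}, so you have supplied in full an argument that the authors outsource to the reference. Your route is the natural self-contained one and in spirit matches the cited source. The heart is the convex-combination identity in (i): I checked that with $p_\varepsilon=\varepsilon^2 st$, $\eta_\varepsilon=\varepsilon(1-\varepsilon s)/(1-\varepsilon^2 st)$ and $\theta(\varepsilon)=(1-\varepsilon^2 st)/(1-\varepsilon s)^2$ the identity is exact (the $\hat x$-, $d$- and $(\bar w_0+u)$-coefficients come out to $\hat x+\varepsilon^2 st(q-\hat x)$, $\varepsilon d-\varepsilon^2 sd$ and $\varepsilon^2(\bar w_0+u)$ respectively), and the uniform control of $\eta_\varepsilon$ and of the perturbation $v=(\theta(\varepsilon)-1)\bar w_0+\theta(\varepsilon)u$ is handled correctly. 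The closure step via openness of $C$, the argument that $0\in\overline{Q^\circ(\hat x,d)}$ when $d\in{\rm cone}(Q-\hat x)$, and the cone-plus-density arguments in (iii) (including the use of $d\in\overline{\rm cone}\,(Q-\hat x)$ to force $\langle x^*,d\rangle\le 0$, and of $0\in\overline C$ to force $t\ge 0$) are all sound. Two conventions you rely on implicitly and should state: (a) ${\rm cone}(A)$ must be read as $\{ta\,|\,t>0,\ a\in A\}$, excluding $0$ unless $0\in A$ --- otherwise $C$ would contain $0$, could not be open, and the equality in (ii) would fail; (b) the affine functions in the definition of $C^+$ are continuous, so that they have the form $-\langle x^*,\cdot\rangle+t$ with $x^*\in X^*$. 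With these understood, nothing is missing.
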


The following lemma is presented in \cite[Lemma 2]{Pales1} without
proof. Here we include the proof for completeness.

\begin{lemma}\label{lem3.2} Let $\gamma\colon X\to \mathbb{R}$ be a function which is convex and upper semicontinuous. Denote
    $$C:=\{x\in X\,|\, \gamma(x) <0\}.$$ Then, $C$ is open and convex. If
    $C$ is nonempty, then
    $$C^+=\{\varphi\colon X\to \mathbb{R}\;|\;\varphi\  \text{is affine and}\  \exists \mu\geq
    0: \varphi(x)+\mu\gamma(x)\geq 0,\ \ \forall x\in X\}.$$
\end{lemma}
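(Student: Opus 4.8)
The statement to prove is Lemma \ref{lem3.2}: if $\gamma\colon X\to\mathbb{R}$ is convex and upper semicontinuous and $C=\{x\in X:\gamma(x)<0\}$ is nonempty, then $C$ is open and convex, and $C^+$ consists of the affine functions $\varphi$ for which there is $\mu\geq 0$ with $\varphi(x)+\mu\gamma(x)\geq 0$ for all $x\in X$.

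The plan is as follows. First I would dispose of the easy topological facts: convexity of $C$ is immediate from convexity of $\gamma$, and openness of $C$ is immediate from upper semicontinuity (the sublevel set $\{\gamma<0\}$ is open for a u.s.c. function). Next, the inclusion "$\supseteq$" in the formula for $C^+$ is trivial: if $\varphi$ is affine and $\varphi+\mu\gamma\geq 0$ on $X$ with $\mu\geq 0$, then for $x\in C$ we have $\varphi(x)\geq -\mu\gamma(x)>0\geq 0$, so $\varphi\in C^+$ (note $\varphi\geq 0$ on $C$, hence on $\overline C$, so in fact $\varphi(x)\ge 0$ for $x\in C$ suffices since $C$ is what appears in the definition of $C^+$). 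The substance is the reverse inclusion "$\subseteq$": given an affine $\varphi$ with $\varphi\geq 0$ on $C$, produce the multiplier $\mu\geq 0$.

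For the hard direction I would argue by a convex-separation / Hahn--Banach argument. Write $\varphi(x)=\langle x^*,x\rangle + c$. Consider in $X\times\mathbb{R}$ the convex set $A:=\{(x,r): \gamma(x)<r\}$ (the strict epigraph of $\gamma$, which is convex since $\gamma$ is convex and nonempty-interior considerations are not needed) and note that $(x,0)\in A$ exactly when $\gamma(x)<0$, i.e. when $x\in C$. Define the convex function on $X\times\mathbb{R}$ by $\psi(x,r):=\varphi(x)$ on the affine subspace $r=0$; more efficiently, consider the two convex sets $A$ and $B:=\{(x,r): r\le 0\} \cap (\{\varphi<0\}\times\mathbb{R})$, or directly invoke the following: the function $x\mapsto \varphi(x)$ is bounded below on the open convex set $C$ by $0$. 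Apply a separation argument to the disjoint convex sets $\{(x,t)\in X\times\mathbb R : t<\varphi(x)\}$... — but cleaner is this route: since $\varphi\ge 0$ on $C=\{\gamma<0\}$, the convex sets $\operatorname{epi}\gamma$-related set $\{(x,s): s<0\}\cap(\text{graph region})$ can be separated. Concretely, I would apply the standard lemma that for convex $\gamma$ and affine $\varphi$ with $\{\gamma<0\}\ne\emptyset$ and $\varphi\ge 0$ on $\{\gamma<0\}$, there exists $\mu\ge 0$ with $\varphi+\mu\gamma\ge 0$: separate the point $(0_X, 0)\notin \operatorname{int}(\operatorname{conv}(\{(x,\gamma(x)-\varepsilon)\}\cup\ldots))$... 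The cleanest execution: the sets $U:=\{(x,r)\in X\times\mathbb R: \gamma(x)<r<0 \text{ or } (r=0,\ \gamma(x)<0)\}$...

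Let me state the intended clean argument. Apply the Hahn--Banach separation theorem to the two convex sets $E_1 := \{(x,r)\in X\times\mathbb{R} : r > \gamma(x)\}$ (open, convex, nonempty) and $E_2 := \{(x,r) : r \le \min(0,-\varphi(x))\}$... this is getting heavy; the key point is that $E_1$ is open and convex, the set $\{(x,r): r\le 0,\ \varphi(x)\le 0\}$ meets $E_1$ only if $\gamma(x)<0$ and $\varphi(x)\le 0$, which by hypothesis forces $\varphi(x)=0$ — a boundary situation, so the two sets have disjoint interiors and can be separated by a hyperplane, which after normalization gives exactly a relation $\alpha\varphi(x)+\beta(\text{something})+\mu\gamma(x)\ge$ const with $\mu,\alpha\ge 0$; ruling out $\alpha=0$ using nonemptiness of $C$ then yields $\varphi+\mu\gamma\ge 0$ on $X$ after rescaling. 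The main obstacle is precisely this separation step: one must choose the two convex sets in $X\times\mathbb{R}$ correctly so that (a) one of them is open (to apply Hahn--Banach in a general Banach space without compactness), (b) they are disjoint or have disjoint nonempty interior, and (c) the resulting linear functional, once shown to have a strictly positive coefficient on the $\mathbb{R}$-component and a nonnegative coefficient structure, decodes into $\varphi(x)+\mu\gamma(x)\ge 0$ with $\mu\ge 0$ — including a small case analysis to exclude the degenerate separating functional. Everything else (openness, convexity, the trivial inclusion, and the final rescaling) is routine.
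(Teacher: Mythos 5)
Your treatment of the openness and convexity of $C$ and of the inclusion ``$\supseteq$'' is fine (modulo the harmless slip that $-\mu\gamma(x)>0$ fails when $\mu=0$; the needed inequality $\varphi(x)\ge-\mu\gamma(x)\ge 0$ still holds since $\mu\ge 0$ and $\gamma(x)<0$). The genuine gap is in the hard inclusion ``$\subseteq$'': you never actually carry it out. You correctly reduce it to the claim that if $\gamma$ is convex, $\varphi$ is affine, $\{\gamma<0\}\neq\emptyset$ and $\varphi\ge 0$ on $\{\gamma<0\}$, then $\varphi+\mu\gamma\ge 0$ on $X$ for some $\mu\ge 0$ --- but then you either appeal to this as a ``standard lemma'' (which is circular, since it \emph{is} the statement to be proved) or you begin a Hahn--Banach separation in $X\times\mathbb{R}$ and abandon it several times without ever fixing the two convex sets, verifying their disjointness, or decoding the separating functional. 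As written, the key step has no proof.

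For comparison, the paper's proof observes that $\varphi\in C^+$ if and only if the convex system $\varphi(x)<0$, $\gamma(x)<0$ has no solution, and then invokes Ky Fan's alternative theorem \cite[Theorem 1]{Fan1957} to obtain $\lambda,\mu\ge 0$, not both zero, with $\lambda\varphi+\mu\gamma\ge 0$ on $X$; nonemptiness of $C$ rules out $\lambda=0$ (otherwise $\mu\gamma\ge 0$ with $\mu>0$ contradicts $\gamma(x)<0$ for some $x$), and dividing by $\lambda$ finishes. If you prefer to avoid the citation and argue by separation, the clean route is not in $X\times\mathbb{R}$ but in $\mathbb{R}^2$: the image set $A:=\{(u,v)\in\mathbb{R}^2\,:\,\exists x\in X,\ \varphi(x)<u,\ \gamma(x)<v\}$ is convex and misses the origin precisely because the system is inconsistent; separating $(0,0)$ from $A$ in $\mathbb{R}^2$ (where no openness hypothesis is needed) gives $(\lambda,\mu)\neq(0,0)$ with $\lambda u+\mu v\ge 0$ on $A$, the upward-closedness of $A$ forces $\lambda,\mu\ge 0$, and letting $u\downarrow\varphi(x)$, $v\downarrow\gamma(x)$ yields $\lambda\varphi+\mu\gamma\ge 0$. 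This sidesteps exactly the difficulties your sketch gets stuck on (choice of sets, openness for Hahn--Banach in a Banach space, decoding an $X^*$-component). Note also that the upper semicontinuity of $\gamma$ is used only for the openness of $C$, not in the multiplier argument.
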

\begin{proof} The openness and the convexity of $C$ are immediate from the upper semicontinuity and the convexity  of $\gamma$. Let $\varphi$ be an affine function defined on $X$. It is easily seen that $\varphi\in C^+$ if and only if the following convex system
\begin{equation*}
\begin{cases}
\varphi(x)&<0,
\\
\gamma(x)&<0,
\end{cases}
\end{equation*}
has no solution $x\in X$. By Ky Fan's Theorem \cite[Theorem 1]{Fan1957}, the inconsistency of the above system is equivalent to that there exist $\lambda\geq 0$, $\mu\geq 0$, not all zero, such that
    $$\lambda\varphi(x)+\mu\gamma(x)\geq 0,\ \ \forall  x\in X.$$
Under the assumption $C\neq\emptyset$, we can choose $\lambda=1$, and so the lemma follows.
\end{proof}

\begin{lemma} [{see \cite[Lemma 1]{Pales1}}]\label{lem3.3} Let $N$ be a positive integer and $C_1,  C_2, \ldots, C_N$ be
    nonempty convex sets in $X$ such that $ C_1, \ldots, C_{N-1}$  are open.
    Then $$\bigcap_{i=1}^N C_i=\emptyset$$ if and only if there exist
    affine functions $\varphi_1, \varphi_2, \ldots, \varphi_N\colon X\to \mathbb{R}$ not
    all constant such that $$ \sum_{i=1}^N \varphi_i =0, \ \ \varphi_i|_{C_i} \geq 0, \ \ \forall i=1, 2, \ldots, N.$$
\end{lemma}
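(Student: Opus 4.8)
The plan is to prove the two implications separately; the ``if'' direction is elementary, and all the work lies in ``only if'', which I would obtain from a single application of the Hahn--Banach separation theorem after lifting to the product space $X^{N-1}$.

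For ``if'', suppose affine functions $\varphi_1,\dots,\varphi_N$ as in the statement exist and, arguing by contradiction, pick $x_0\in\bigcap_{i=1}^N C_i$. From $\varphi_i(x_0)\ge 0$ for all $i$ together with $\sum_{i=1}^N\varphi_i(x_0)=0$ we get $\varphi_i(x_0)=0$ for every $i$. For $i\le N-1$ the function $\varphi_i$ must then be constant: otherwise its linear part $x_i^*$ is nonzero, there is $h$ with $\langle x_i^*,h\rangle<0$, and openness of $C_i$ gives $x_0+th\in C_i$ with $\varphi_i(x_0+th)=t\langle x_i^*,h\rangle<0$ for small $t>0$, contradicting $\varphi_i|_{C_i}\ge 0$. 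Being constant and vanishing at $x_0$, each of $\varphi_1,\dots,\varphi_{N-1}$ is identically zero, hence $\varphi_N=-\sum_{i<N}\varphi_i\equiv 0$ as well, contradicting ``not all constant''. Thus $\bigcap_{i=1}^N C_i=\emptyset$.

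For ``only if'' I may assume $N\ge 2$ (as $C_1\neq\emptyset$). The key step is to encode disjointness of the $C_i$ as a point missing an open convex set: in $X^{N-1}$ I would consider
$$W:=\bigl\{(c_1-c_N,\dots,c_{N-1}-c_N)\mid c_i\in C_i,\ i=1,\dots,N\bigr\}.$$
Since $W$ is the Minkowski sum of the open convex set $C_1\times\cdots\times C_{N-1}$ and the convex set $\{(-c_N,\dots,-c_N)\mid c_N\in C_N\}$, it is open, convex and nonempty; and $0\in W$ holds exactly when $C_1\cap\cdots\cap C_N\neq\emptyset$, so $0\notin W$. Separating $\{0\}$ from the open convex set $W$ gives $\Lambda=(x_1^*,\dots,x_{N-1}^*)\in(X^{N-1})^*\setminus\{0\}$ with $\langle\Lambda,w\rangle>0$ on $W$, that is, $\sum_{i=1}^{N-1}\langle x_i^*,c_i\rangle\ge\bigl\langle\sum_{i=1}^{N-1}x_i^*,c_N\bigr\rangle$ for all $c_i\in C_i$. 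Freezing all but one argument shows each $\langle x_i^*,\cdot\rangle$ is bounded below on $C_i$ for $i\le N-1$; putting $\alpha_i:=\inf_{C_i}\langle x_i^*,\cdot\rangle$ and $\alpha_N:=\sup_{C_N}\bigl\langle\sum_{i<N}x_i^*,\cdot\bigr\rangle$ one gets $\alpha_i\in\mathbb{R}$ and $\alpha_N\le\sum_{i<N}\alpha_i$. I would then set $\varphi_i(x):=\langle x_i^*,x\rangle-\alpha_i$ for $i\le N-1$ and $\varphi_N(x):=-\bigl\langle\sum_{i<N}x_i^*,x\bigr\rangle+\sum_{i<N}\alpha_i$: by construction $\sum_{i=1}^N\varphi_i=0$; one has $\varphi_i|_{C_i}\ge 0$ for $i\le N-1$ since $\langle x_i^*,c_i\rangle\ge\alpha_i$, and $\varphi_N|_{C_N}\ge 0$ since $\langle\sum_{i<N}x_i^*,c_N\rangle\le\alpha_N\le\sum_{i<N}\alpha_i$; finally the $\varphi_i$ are not all constant because the $x_i^*$ are not all zero.

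The one point needing care will be this passage to $X^{N-1}$: Hahn--Banach must be applied with an \emph{open} set, which is exactly why $C_N$ --- the single set allowed to have empty interior --- is absorbed into the diagonal ``$-c_N$'' term rather than kept as a factor of the product; the remainder is bookkeeping, the constants $\alpha_i$ being chosen precisely so that the affine pieces sum to zero while staying nonnegative on the respective $C_i$. (For small $N$ one could instead iterate the $N=2$ separation, but the product-space argument disposes of all $N$ at once.)
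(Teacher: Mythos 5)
Your proof is correct. Note, however, that the paper itself offers no proof of this lemma: it is quoted from P\'ales and Zeidan \cite[Lemma 1]{Pales1} as a known tool, so there is no in-paper argument to compare yours against --- what you have written is, in effect, the missing self-contained proof. Both halves check out. In the ``if'' direction you correctly use openness of $C_1,\dots,C_{N-1}$ to force the linear parts of $\varphi_1,\dots,\varphi_{N-1}$ to vanish at a hypothetical common point, which collapses all the $\varphi_i$ to zero and contradicts ``not all constant.'' The ``only if'' direction is the classical Dubovitskii--Milyutin device: the set $W=\{(c_1-c_N,\dots,c_{N-1}-c_N)\}$ is convex, nonempty, open (being the Minkowski sum of the open set $C_1\times\cdots\times C_{N-1}$ with another set), and omits the origin precisely because $\bigcap_i C_i=\emptyset$; a single geometric Hahn--Banach separation in $X^{N-1}$, combined with the identification $(X^{N-1})^*\cong (X^*)^{N-1}$, produces $x_1^*,\dots,x_{N-1}^*$ not all zero, and your constants $\alpha_i=\inf_{C_i}\langle x_i^*,\cdot\rangle$, together with the inequality $\alpha_N\le\sum_{i<N}\alpha_i$ obtained by taking infima and suprema in the separation inequality, are exactly what is needed to make the affine pieces sum to zero while staying nonnegative on the respective sets. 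Your observation that only $C_N$ may lack interior, and that it must therefore be absorbed into the ``$-c_N$'' term rather than kept as a factor of the open product, is the one genuinely delicate point and you handle it correctly.
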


\medskip

Define the {\em support function} of a nonempty set $C\subset X$ associated with $x^*\in X^*$ by
\begin{equation*}
\delta^*(x^*; C):=
\begin{cases}
\sup\{\langle x^*, c\rangle\,|\, c\in C\}, &\ \ \text{if}\ \ C\neq\emptyset,
\\
-\infty, &\ \ \text{if}\ \ C=\emptyset.
\end{cases}
\end{equation*}

\begin{lemma}[{see \cite[Lemma 4]{Pales1}}]\label{lem3.6} Let $X$ and $Y$ be  Banach spaces. Let $A\colon X\to Y$ be a bounded linear operator that maps $X$ onto $Y$ and let $K\subset Y$ be a nonempty convex set. Denote $C:=\{x\in X\,|\, Ax\in K\}.$    Then,
    $$C^+=\{\varphi\colon X\to \mathbb{R}\,|\, \varphi \mbox{ is affine and }\exists y^*\in Y^*\,:\,\varphi(x)\geq -\langle y^*, Ax\rangle+\delta^*(y^*;K),\;x\in X\}.$$
\end{lemma}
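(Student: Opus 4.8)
The plan is to prove the two inclusions separately. The inclusion ``$\supseteq$'' is immediate from the definition of the support function, while ``$\subseteq$'' is where the work lies, and it rests on the open mapping theorem. For the easy direction, suppose $\varphi$ is affine and there is $y^*\in Y^*$ with $\varphi(x)\ge -\langle y^*, Ax\rangle+\delta^*(y^*;K)$ for every $x\in X$. If $x\in C$, then $Ax\in K$, so $\langle y^*, Ax\rangle\le \delta^*(y^*;K)$, and therefore $\varphi(x)\ge -\langle y^*,Ax\rangle+\delta^*(y^*;K)\ge 0$; hence $\varphi\in C^+$.

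For the main inclusion, let $\varphi\in C^+$ and write $\varphi(\,\cdot\,)=\langle x^*,\,\cdot\,\rangle+\alpha$ with $x^*\in X^*$ and $\alpha\in\mathbb{R}$. First note that $C=A^{-1}(K)$ is nonempty, since $K\neq\emptyset$ and $A$ is onto, and that $C$ is invariant under translations by the subspace $\ker A$. Since $\varphi\ge 0$ on $C$, the linear functional $\langle x^*,\,\cdot\,\rangle$ is bounded below on $C$; combined with the $\ker A$-invariance of $C$, this forces $\langle x^*,n\rangle=0$ for all $n\in\ker A$. Now I would invoke the open mapping theorem: as $A$ is a bounded surjection between Banach spaces, there is a constant $M>0$ such that every $y\in Y$ admits a preimage $x$ with $\|x\|\le M\|y\|$. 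Consequently the assignment $\ell(Ax):=-\langle x^*,x\rangle$ is well defined on $Y$ (by $x^*|_{\ker A}=0$), linear, and bounded by $M\|x^*\|$; so $\ell=:y^*\in Y^*$ and $A^*y^*=-x^*$, that is, $\langle x^*,x\rangle=-\langle y^*,Ax\rangle$ for all $x\in X$.

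It remains to check $\delta^*(y^*;K)\le\alpha$. Given $k\in K$, choose $x\in X$ with $Ax=k$; then $x\in C$, so $0\le\varphi(x)=-\langle y^*,k\rangle+\alpha$, i.e. $\langle y^*,k\rangle\le\alpha$. Taking the supremum over $k\in K$ yields $\delta^*(y^*;K)\le\alpha$ (in particular this quantity is finite). Therefore, for every $x\in X$,
$$\varphi(x)=-\langle y^*,Ax\rangle+\alpha\ge -\langle y^*,Ax\rangle+\delta^*(y^*;K),$$
which is exactly the claimed representation, and the proof is complete. The only genuinely nontrivial ingredient is the construction of $y^*$: it is precisely the metric surjectivity of $A$ supplied by the open mapping theorem — equivalently, the closed range theorem identifying $(\ker A)^\perp$ with the range of $A^*$ — that converts the condition $x^*|_{\ker A}=0$ into the existence of a bounded $y^*$ with $A^*y^*=-x^*$. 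Everything else is bookkeeping with the definitions of $C^+$ and $\delta^*$.
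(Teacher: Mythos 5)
Your proof is correct. The paper gives no argument of its own for this lemma---it is quoted directly from P\'ales and Zeidan \cite[Lemma 4]{Pales1}---so there is nothing internal to compare against; your route (the trivial inclusion from the definition of $\delta^*$, and for the converse the vanishing of $x^*$ on $\ker A$ followed by factorization through $A$ via the open mapping theorem and the estimate $\delta^*(y^*;K)\le\alpha$) is the standard one and is complete, granted the paper's convention that ``affine'' means $\varphi=\langle x^*,\cdot\,\rangle+\alpha$ with a \emph{continuous} linear part $x^*\in X^*$, which is indeed how affine functions are used throughout (cf.\ Lemma \ref{lem3.5}(iii)).
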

\section{Optimal conditions}
\label{Sect3}
We now return to problem $(P)$.
Put $J:=\{1, \ldots, m\}$. Hereafter, we use the notation $Q_{1}:=\{x\in D\,|\, G(x)\in Q\}$ and $Q_{2}:=\{x\in D\,|\, H(x)=0\}$.   
\begin{definition}{\rm  We say that $\hat x\in D\cap Q_1\cap Q_2$ is a {\em  weak Pareto efficient solution}  of $(P)$ if  there is no  $x\in D\cap Q_1\cap Q_2$  such that
    $$F_j(x)-F_j(\hat x)<0, \ \ \forall j=1, \ldots, m.$$
}
\end{definition}
The following lemma gives a necessary condition for a  weak Pareto efficient solution of $(P)$, which will be needed in the sequel. The idea of the proof is from \cite{Ben-Tal}.
\begin{lemma}\label{lem3.1} If $\hat x$ is a  weak Pareto efficient solution  of $(P)$, then
$$\bigg(\bigcap_{j=1}^{m} W_\delta^2(f_j; \hat x, d)\bigg)\cap W_\alpha^2(Q_{1}; \hat x,
d) \cap W^2_{\tau}(Q_{2};\hat x, d)=\emptyset.
$$
\end{lemma}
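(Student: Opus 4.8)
The plan is to argue by contradiction: suppose the intersection is nonempty, so there is a common vector $\bar w$ lying in $\bigcap_{j=1}^m W^2_\delta(f_j;\hat x,d)$, in $W^2_\alpha(Q_1;\hat x,d)$, and in $W^2_\tau(Q_2;\hat x,d)$. I will then manufacture a feasible point that strictly dominates $\hat x$ in every objective, contradicting weak Pareto efficiency.

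First I would unwind the three membership conditions. Since $\bar w\in W^2_\tau(Q_2;\hat x,d)$, there exist sequences $\varepsilon_n\to 0^+$ and $w_n\to 0$ with $\hat x+\varepsilon_n d+\varepsilon_n^2(\bar w+w_n)\in Q_2$ for all $n$; this is the one ``approximate'' condition, and it fixes the sequence along which everything else must be tested. Next, because $\bar w\in W^2_\alpha(Q_1;\hat x,d)$ and this set is open, there is $\bar\varepsilon>0$ such that $\hat x+\varepsilon d+\varepsilon^2(\bar w+w)\in Q_1$ for all $\varepsilon\in(0,\bar\varepsilon)$ and $\|w\|<\bar\varepsilon$; in particular, for $n$ large (so $\varepsilon_n<\bar\varepsilon$ and $\|w_n\|<\bar\varepsilon$) the point $x_n:=\hat x+\varepsilon_n d+\varepsilon_n^2(\bar w+w_n)$ lies in $Q_1$. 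The same openness argument applied to each $W^2_\delta(f_j;\hat x,d)$ (these are open by definition) gives, after shrinking $\bar\varepsilon$ finitely many times, that $f_j(x_n)<f_j(\hat x)$ for all $j=1,\dots,m$ and all large $n$. Also $x_n\in D$ for large $n$ by openness of $D$ and $x_n\to\hat x$.

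Then $x_n\in D\cap Q_1\cap Q_2$ for all sufficiently large $n$, and $F_j(x_n)<F_j(\hat x)$ for every $j$, which directly contradicts the assumption that $\hat x$ is a weak Pareto efficient solution of $(P)$. Hence the intersection must be empty.

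The only delicate point is the compatibility of the three conditions along a single sequence: the tangent-variation set $W^2_\tau(Q_2;\hat x,d)$ supplies a specific sequence $(\varepsilon_n,w_n)$, whereas the admissible-variation and descent-variation sets are ``stable'' (they tolerate any small $\varepsilon$ and any small perturbation $w$), so they automatically accommodate $(\varepsilon_n,w_n)$ once $n$ is large. I would state this explicitly: the asymmetry between $W^2_\tau$ (an upper-limit/approximate object) and $W^2_\alpha$, $W^2_\delta$ (open, hence robust) is exactly what makes the intersection argument go through, and it is the reason $Q_2$ (the equality constraint, the hardest to satisfy exactly) is handled by the tangent variation while $Q_1$ and the sublevel sets are handled by admissible/descent variations. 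No estimates beyond the defining properties of these sets are needed.
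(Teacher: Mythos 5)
Your proposal is correct and follows essentially the same route as the paper: contradiction, extracting the sequence $(\varepsilon_n,w_n)$ from the tangent-variation condition for $Q_2$, and using the uniform $\bar\varepsilon$ supplied directly by the definitions of the descent and admissible variation sets (taking the minimum over the finitely many $j$) to conclude that $x_n$ is feasible and strictly dominates $\hat x$. The only cosmetic difference is that you invoke ``openness'' of $W^2_\delta$ and $W^2_\alpha$, whereas the needed robustness in $(\varepsilon,w)$ is already built into the membership condition for $\bar w$ itself; the argument is otherwise identical to the paper's.
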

\begin{proof} Arguing by contradiction, assume that there exists $\bar w$ in the  above intersection. Then, there is $\bar\varepsilon>0$ such that
\begin{align*}
\hat x+\varepsilon d+\varepsilon^2(\bar w+w)&\in D,
\\
f_j(\hat x+\varepsilon d+\varepsilon^2(\bar w+w))&<f_j(\hat x), \ \ j\in J,
\\
\hat x+\varepsilon d+\varepsilon^2(\bar w+w) &\in Q_{1},
\end{align*}
hold for all $\|w\|<\bar\varepsilon$ and $0<\varepsilon<\bar\varepsilon$.  Furthermore, since $\bar w\in W^2_{\tau}(Q_{2}; \hat x, d)$, it follows that there exist sequences $\varepsilon_n>0$, $w_n\in Z$ converging to zero such that
$$\hat x+\varepsilon_n d+\varepsilon_n^2(\bar w+w_n)\in Q_{2}, \ \ \forall n\in\mathbb{N}.$$
Now choose $n_0$ large enough such that $\varepsilon_n<\bar \varepsilon$ and $\|w_n\|<\bar \varepsilon$ for all $n>n_0$. Then the sequence $x_n:=\hat x+\varepsilon_n d+\varepsilon_n^2(\bar w+w_n)$ converges to $\hat x$ and
\begin{equation*}
x_n\in D\cap Q_1\cap Q_2\ \ \text{and}\ \ f_j(x_n)<f_j(\hat x), \ \ \forall n>n_0,
\end{equation*}
which contradicts the optimality of $\hat x$.
\end{proof}

We say that $d\in X$ is a {\em critical direction} of $(P)$ at $\hat x$ if
\begin{equation}
\begin{cases} \langle f_j'(\hat x), d\rangle \leq 0, \ \ j\in J, \notag\\
\langle H'(\hat x), d\rangle=0, \notag\\
 G'(\hat x)d\in \overline{\text{cone}}(Q- G(\hat x)).
 \end{cases}
\end{equation}
The set of all critical direction of  $(P)$ at $\hat x$ is denoted by $\mathcal{C}(\hat x)$.
A direction $d$ is called {\it a regular direction} at $\hat x$
if $H''(\hat x; d)\neq \emptyset$, $G''(\hat x, d)\neq \emptyset$ and $Q^\circ(G(\hat x), G'(\hat x)d)\neq
\emptyset$.

\medskip
We now state the main result of the paper.
\begin{theorem}\label{main-result-1}  Assume that $\hat x$ is a  weak Pareto efficient solution of  $({P})$,  $f_j \in C^{1,1}(D)$, $j\in J$, $H$ and $G$
	are strictly  differentiable at $\hat x$ such that $H'(\hat x)(X)$
	is a closed subspace of $Y$. Then, for  all  critical
	directions $d\in {\mathcal{C}}(\hat x)$ and convex sets $K\subset H''(\hat
	x; d)$ and $M\subset G''(\hat x; d)$,  there exist nonnegative
	numbers $\mu_1, \ldots, \mu_m,$  and functionals $y^*\in Y^*$, $z^*\in Z^*$, not all zero such that the following
	conditions hold:
	\begin{enumerate}[{\rm(i)}]
		\item  the complementarity conditions
		\begin{equation*}
		z^*\in N(Q; G(\hat x))\quad\text{and}\quad \langle z^*, G'(\hat
		x)d\rangle=0.
		\end{equation*}
		\item  the first-order necessary condition
		\begin{equation*}
		\sum_{j=1}^m \mu_j f'_j(\hat x) +   H'(\hat x)^* y^* + G'^*(\hat x)z^*=0,
		\end{equation*}
		\item  the second-order necessary condition
		\begin{equation*}
		\sum_{j=1}^m \mu_j\sup_{L\in\partial^2 f_j(\hat x)(d)}\langle L, d\rangle  \geq\delta^*(-y^*; K) +\delta^*(-z^*; M) + 2\delta^*(
		z^*; Q^\circ(G(\hat x), G'(\hat x)d)). 
		\end{equation*}
	\end{enumerate}
\end{theorem}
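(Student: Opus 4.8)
The plan is to run the Dubovitskii--Milyutin separation scheme on the empty intersection supplied by Lemma~\ref{lem3.1}. Fix a critical direction $d\in\mathcal{C}(\hat x)$ and convex sets $K\subset H''(\hat x;d)$, $M\subset G''(\hat x;d)$; I may assume $K$ and $M$ are nonempty (otherwise a support function on the right of (iii) equals $-\infty$, so (iii) is vacuous, and (i)--(ii) follow from a degenerate form of the argument below). Since $H'(\hat x)(X)$ is closed, I first pass to the Banach space $\bar Y:=H'(\hat x)(X)$, so that $H'(\hat x)\colon X\to\bar Y$ is onto and Lemma~\ref{lem3.4} applies; the functional $y^*$ obtained in the sequel is extended back to $Y$ by Hahn--Banach. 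Next I replace the three ingredients of Lemma~\ref{lem3.1} by tractable subsets. For each $j$, Proposition~\ref{prop2.2}, whose hypothesis $\langle f_j'(\hat x),d\rangle\le 0$ holds because $d$ is critical, gives the open convex half-space
$$A_j:=\Big\{w\in X\ \big|\ \langle f_j'(\hat x),w\rangle+\tfrac12\sup_{L\in\partial^2 f_j(\hat x)(d)}\langle L,d\rangle<0\Big\}\subseteq W^2_{\delta}(f_j;\hat x,d).$$
Since $\langle H'(\hat x),d\rangle=0$ and $H$ is twice weakly directionally differentiable at $\hat x$ in the direction $d$ (as $K\ne\emptyset$), Lemma~\ref{lem3.4} shows the convex set $E:=\{\bar w\in X\ |\ \langle H'(\hat x),\bar w\rangle\in-\tfrac12 K\}$ lies in $W^2_{\tau}(Q_2;\hat x,d)$. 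Finally, using the strict differentiability of $G$ together with the identity $Q^\circ(G(\hat x),G'(\hat x)d)=W^2_{\alpha}(Q;G(\hat x),G'(\hat x)d)$, I would show that the open convex set $B:=\{\bar w\in X\ |\ G'(\hat x)\bar w\in Q^\circ(G(\hat x),G'(\hat x)d)-\tfrac12 M\}$ is contained in $W^2_{\alpha}(Q_1;\hat x,d)$.

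With these inclusions, Lemma~\ref{lem3.1} forces $\big(\bigcap_{j=1}^m A_j\big)\cap B\cap E=\emptyset$. I would then invoke Lemma~\ref{lem3.3} for the $m+2$ convex sets $A_1,\dots,A_m,B,E$, of which only the last need not be open: there exist affine functionals $\varphi_1,\dots,\varphi_m,\psi,\chi\colon X\to\mathbb{R}$, not all constant, with $\sum_{j=1}^m\varphi_j+\psi+\chi=0$ and $\varphi_j|_{A_j}\ge0$, $\psi|_B\ge0$, $\chi|_E\ge0$. Now I decode each functional. Nonnegativity of the affine $\varphi_j$ on the half-space $A_j$ forces $\varphi_j(\cdot)=-\mu_j\langle f_j'(\hat x),\cdot\rangle+t_j$ with $\mu_j\ge0$ and constant term $t_j\ge-\tfrac{\mu_j}{2}\sup_{L\in\partial^2 f_j(\hat x)(d)}\langle L,d\rangle$. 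Because $E$ has the form $\{\bar w\ |\ H'(\hat x)\bar w\in-\tfrac12 K\}$ treated by Lemma~\ref{lem3.6} (using that $H'(\hat x)$ maps onto $\bar Y$), nonnegativity of $\chi$ on $E$ produces $y^*\in\bar Y^*$ whose induced linear part of $\chi$ is $-H'(\hat x)^*y^*$ and whose constant term satisfies $c_\chi\ge\tfrac12\delta^*(-y^*;K)$. Likewise, writing $B=\{\bar w\ |\ \gamma(\bar w)<0\}$ for a suitable continuous convex $\gamma$ (a gauge of the open convex set $Q^\circ(G(\hat x),G'(\hat x)d)-\tfrac12 M$ composed with $G'(\hat x)$), I would apply Lemma~\ref{lem3.2} to $\psi|_B\ge0$ and combine it with Lemma~\ref{lem3.5}(i)--(iii) for the cone $C:=\mathrm{cone}(\mathrm{cone}(Q^\circ-G(\hat x))-G'(\hat x)d)$ (here $Q^\circ$ is the interior of $Q$, and $C$ lies in the recession cone of $Q^\circ(G(\hat x),G'(\hat x)d)$): this yields $z^*\in Z^*$ with $z^*\in N(Q;G(\hat x))$ and $\langle z^*,G'(\hat x)d\rangle=0$ --- which is exactly (i) --- with linear part of $\psi$ equal to $-G'(\hat x)^*z^*$ and constant term $c_\psi\ge\delta^*(z^*;Q^\circ(G(\hat x),G'(\hat x)d))+\tfrac12\delta^*(-z^*;M)$.

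It remains to read off (ii) and (iii). Comparing linear parts in $\sum_{j=1}^m\varphi_j+\psi+\chi=0$ gives $\sum_{j=1}^m\mu_j f_j'(\hat x)+H'(\hat x)^*y^*+G'(\hat x)^*z^*=0$, which is (ii). Comparing constant parts gives $\sum_{j=1}^m t_j+c_\psi+c_\chi=0$; inserting the three one-sided estimates above and multiplying by $-2$ yields
$$\sum_{j=1}^m\mu_j\sup_{L\in\partial^2 f_j(\hat x)(d)}\langle L,d\rangle\ \ge\ \delta^*(-y^*;K)+\delta^*(-z^*;M)+2\delta^*\big(z^*;Q^\circ(G(\hat x),G'(\hat x)d)\big),$$
which is (iii) (with $y^*$ extended to $Y^*$). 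Non-triviality of $(\mu_1,\dots,\mu_m,y^*,z^*)$ follows from the ``not all constant'' clause of Lemma~\ref{lem3.3}: were all $\mu_j$ zero and $y^*=z^*=0$, each of $\varphi_j,\psi,\chi$ would have zero linear part, hence be constant, a contradiction. The remaining degenerate cases --- some $A_j$, $B$ or $E$ empty, or $Q^\circ(G(\hat x),G'(\hat x)d)=\emptyset$ --- are handled separately as indicated above.

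The principal difficulties will be two. First, establishing $B\subseteq W^2_{\alpha}(Q_1;\hat x,d)$: since $G''(\hat x;d)$ is a Painlev\'e--Kuratowski upper limit, it controls the behaviour of $G$ in the direction $d$ only along suitable sequences $\varepsilon_n\downarrow0$, whereas membership in $W^2_{\alpha}(Q_1;\hat x,d)$ demands the feasibility relation for all small $\varepsilon$; reconciling this --- by exploiting the ``for all $\|w\|$ small'' slack present both in $W^2_{\alpha}$ and in $Q^\circ(\cdot,\cdot)$ --- is what forces the precise form of $B$ and is the most delicate point. Second, the passage from the abstract functionals $\psi$ and $\chi$ to the explicit complementarity and second-order conditions: the bookkeeping of the constant terms, the factor $2$ in front of $\delta^*(z^*;Q^\circ(G(\hat x),G'(\hat x)d))$, and the correct treatment of non-regular critical directions all rest on the fine structure of $Q^\circ(\cdot,\cdot)$ and, in particular, on the characterization of $C^+$ in Lemma~\ref{lem3.5}(iii).
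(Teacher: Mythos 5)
There is a genuine gap, and it sits exactly where you flagged ``the most delicate point'': the claimed inclusion $B\subseteq W^2_{\alpha}(Q_1;\hat x,d)$ cannot be established. By definition, $G''(\hat x;d)$ is a sequential Painlev\'e--Kuratowski upper limit, so for $y\in M\subset G''(\hat x;d)$ the expansion
$G(\hat x+\varepsilon d)=G(\hat x)+\varepsilon G'(\hat x)d+\tfrac12\varepsilon^2 y+o(\varepsilon^2)$
is valid only along \emph{some} sequence $\varepsilon_n\downarrow 0$; off that sequence the quotient $2\bigl[G(\hat x+\varepsilon d)-G(\hat x)-\varepsilon G'(\hat x)d\bigr]/\varepsilon^2$ is completely uncontrolled for a merely strictly differentiable $G$ (it may even be unbounded). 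Membership in $W^2_{\alpha}(Q_1;\hat x,d)$ requires $G(\hat x+\varepsilon d+\varepsilon^2(\bar w+w))\in Q$ for \emph{all} small $\varepsilon$, and the $w$-slack cannot rescue this: the perturbation it affords is of size $O(\varepsilon^2\bar\varepsilon)$, i.e.\ uniformly small relative to $\varepsilon^2$, whereas the deviation of the second-order difference quotient of $G$ away from the selected subsequence can be arbitrarily large. What you can prove is only $B\subseteq W^2_{\tau}(Q_1;\hat x,d)$, but that does not feed into Lemma~\ref{lem3.1} either: the lemma tolerates exactly one sequential ($\tau$-type) set in the empty intersection, because its proof picks the sequence $\varepsilon_n$ from that one set and uses the ``for all small $\varepsilon$'' property of the others along it; with two independent $\tau$-type sets ($Q_1$ and $Q_2$) the two sequences need not produce a common feasible point, and the conclusion fails.

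The paper avoids this obstruction by not attacking the composite constraint $G(x)\in Q$ directly. It first proves the special case $G(x)=x$ (Theorem~\ref{main-result}), where the set constraint is genuinely convex and hence honestly $\alpha$-type via $Q^\circ(\hat x,d)=W^2_{\alpha}(Q;\hat x,d)$, and then derives Theorem~\ref{main-result-1} by the slack-variable reduction to $(\widetilde P)$: the constraint $G(x)\in Q$ is split into $(x,z)\in X\times Q$ (convex, $\alpha$-type) and the equality $G(x)-z=0$, which is merged with $H(x)=0$ into the single equality constraint $\widetilde H(x,z)=0$ --- so the second-order weak directional derivative of $G$ enters only through the one permitted $\tau$-type set, where sequential convergence suffices (Lemma~\ref{lem3.4}). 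If you want to salvage your direct argument, you must perform this reduction (or an equivalent one) before running the Dubovitskii--Milyutin separation; the rest of your bookkeeping of linear and constant parts, the identification of $\mu_j$, $y^*$, $z^*$ via Lemmas~\ref{lem3.2}, \ref{lem3.5} and \ref{lem3.6}, and the nontriviality argument are in line with what the paper does inside the proof of Theorem~\ref{main-result}.
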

We first prove this theorem for the case that $G(x)=x$ for all $x\in D$, i.e., $Q_{1}=D\cap Q$. In this case, problem $(P)$ is denoted by $(P_1)$ and the obtained result is as follows.
\begin{theorem}\label{main-result} Assume that $\hat x$ is a  weak Pareto efficient  solution of  $(P_1)$, $f_j \in C^{1,1}(D)$ for all $j\in J$, $H$ is
strictly  differentiable at $\hat x$ such that $H'(\hat x)(X)$ is a closed subspace of $Y$. Let  $d$ be a critical direction of $(P_1)$ at $\hat x$. Assume that $K$ is a convex subset in $H''(\hat x; d)$. Then, there exists $(\mu, x^*, y^*)\in (\mathbb{R}^m_+\times  X^*\times Y^*)\setminus\{0\}$ such that the following conditions hold:
\begin{enumerate}[{\rm(i)}] 
    \item the complementarity conditions
    \begin{equation}\label{complementary-condition}
    x^*\in N(Q; \hat x),\ \ \text{and}\quad \langle x^*, d\rangle=0,
    \end{equation}

    \item  the first-order necessary condition
    \begin{equation}\label{first-order-condition}
    \sum_{j=1}^m\mu_j f_j'(\hat x)  + H'(\hat
    x)^* y^* + x^*=0,
    \end{equation}

    \item  the second-order necessary condition
    \begin{equation}\label{second-order-condition}
        \sum_{j=1}^m\mu_j \sup_{L\in\partial^2 f_j(\hat x)(d)}\langle L, d\rangle \geq  \delta^*(-y^*; K) +  2\delta^*( x^*; Q^\circ(\hat x, d)).
    \end{equation}
\end{enumerate}
\end{theorem}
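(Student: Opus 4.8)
We reduce the optimality of $\hat x$ to the emptiness of a finite intersection of convex sets and then apply the Dubovitskii--Milyutin theorem. Since $Q_{1}=D\cap Q$ and $D$ is open with $\hat x\in D$, one has $W^2_\alpha(Q_{1};\hat x,d)=W^2_\alpha(Q;\hat x,d)=Q^\circ(\hat x,d)$, so Lemma \ref{lem3.1} yields
\begin{equation}\label{pp:e1}
\Big(\bigcap_{j=1}^m W^2_\delta(f_j;\hat x,d)\Big)\cap Q^\circ(\hat x,d)\cap W^2_\tau(Q_{2};\hat x,d)=\emptyset .
\end{equation}
As $d$ is a critical direction we have $\langle f_j'(\hat x),d\rangle\le 0$ for each $j$, so Proposition \ref{prop2.2} gives $W_{f_j}\subseteq W^2_\delta(f_j;\hat x,d)$, each $W_{f_j}$ being open and convex. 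Using $H(\hat x)=0$, $\langle H'(\hat x),d\rangle=0$, and the variant of Lemma \ref{lem3.4} in which $H'(\hat x)$ is surjective onto its closed range $Y_0:=H'(\hat x)(X)$ (a Banach space, so $H'(\hat x)\colon X\to Y_0$ is open), one shows that the convex set
\begin{equation}\label{pp:V}
V:=\Big\{\bar w\in X \ \Big|\ 0\in\langle H'(\hat x),\bar w\rangle+\tfrac12 K\Big\}
\end{equation}
is contained in $W^2_\tau(Q_{2};\hat x,d)$ (membership in $V$ forces $-2\langle H'(\hat x),\bar w\rangle\in K\cap Y_0$, so only $K\cap Y_0$ is effectively used). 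Combining with \eqref{pp:e1},
\begin{equation}\label{pp:e2}
\Big(\bigcap_{j=1}^m W_{f_j}\Big)\cap Q^\circ(\hat x,d)\cap V=\emptyset ,
\end{equation}
an intersection of $m+2$ convex sets, the first $m+1$ of which are open.

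If one of the sets in \eqref{pp:e2} is empty we argue directly: if some $W_{f_j}=\emptyset$ then $f_j'(\hat x)=0$ and $\sup_{L\in\partial^2 f_j(\hat x)(d)}\langle L,d\rangle\ge0$, and taking $\mu_j=1$ with all other multipliers zero verifies (i)--(iii); if $Q^\circ(\hat x,d)=\emptyset$ or $V=\emptyset$ (the latter forcing $K\cap Y_0=\emptyset$), then $\delta^*(\,\cdot\,;\emptyset)=-\infty$ makes the right-hand side of (iii) equal to $-\infty$, and it remains to produce some nonzero triple satisfying (i)--(ii), which is done by a separate singular-multiplier argument using the structure of $Q$ near $\hat x$; these are precisely the cases in which the critical direction $d$ is not regular. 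So assume all $m+2$ sets are nonempty. Then Lemma \ref{lem3.3} supplies affine functions $\varphi_1,\dots,\varphi_{m+2}\colon X\to\mathbb{R}$, not all constant, with $\sum_{i=1}^{m+2}\varphi_i=0$ and $\varphi_i\ge0$ on the corresponding set.

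It remains to identify the $\varphi_i$. By Lemma \ref{lem3.2} (applied to the affine function $w\mapsto\langle f_j'(\hat x),w\rangle+\tfrac12\sup_{L\in\partial^2 f_j(\hat x)(d)}\langle L,d\rangle$), by Lemma \ref{lem3.5}(i),(iii) (the set $Q^\circ(\hat x,d)$ is invariant under translation by $C:={\rm cone}({\rm cone}(Q^\circ-\hat x)-d)$, whose adjoint is described in Lemma \ref{lem3.5}(iii)), and by Lemma \ref{lem3.6} (with $A=H'(\hat x)\colon X\to Y_0$ and the convex set $-\tfrac12(K\cap Y_0)\subseteq Y_0$), one obtains
\begin{align*}
\varphi_j(w)&=-\mu_j\langle f_j'(\hat x),w\rangle+a_j,\ \ \mu_j\ge0,\ \ a_j\ge-\tfrac{\mu_j}{2}\sup\nolimits_{L\in\partial^2 f_j(\hat x)(d)}\langle L,d\rangle,\\
\varphi_{m+1}(w)&=-\langle x^*,w\rangle+t,\ \ x^*\in N(Q;\hat x),\ \ \langle x^*,d\rangle=0,\ \ t\ge\delta^*(x^*;Q^\circ(\hat x,d)),\\
\varphi_{m+2}(w)&=-\langle H'(\hat x)^*y^*,w\rangle+b,\ \ y^*\in Y_0^*,\ \ b\ge\tfrac12\delta^*(-y^*;K).
\end{align*}
Equating the linear parts in $\sum_i\varphi_i=0$ gives $\sum_{j=1}^m\mu_jf_j'(\hat x)+x^*+H'(\hat x)^*y^*=0$, i.e. (ii), while (i) is already recorded. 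Equating the constant parts, $\sum_{j=1}^m a_j+t+b=0$, and inserting the lower bounds above yields $\sum_{j=1}^m\mu_j\sup_{L\in\partial^2 f_j(\hat x)(d)}\langle L,d\rangle\ge 2\delta^*(x^*;Q^\circ(\hat x,d))+\delta^*(-y^*;K)$, which is (iii). Finally, since the $\varphi_i$ are not all constant, not all of $\mu_1f_1'(\hat x),\dots,\mu_mf_m'(\hat x),x^*,H'(\hat x)^*y^*$ vanish, and as $H'(\hat x)^*$ is injective on $Y_0^*$ this forces $(\mu,x^*,y^*)\ne0$.

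The main obstacle is the second step. Lemma \ref{lem3.4} (and, likewise, Lemma \ref{lem3.6}) is stated for $H'(\hat x)$ surjective onto all of $Y$, whereas here only $H'(\hat x)(X)=Y_0$ is known to be closed; one must redo the Lyusternik--Graves solvability argument of \cite[Theorem 5]{Pales1} with target space $Y_0$, using the open mapping theorem for $H'(\hat x)\colon X\to Y_0$, and check that the second-order residuals that must be corrected along the admissible paths indeed lie in $Y_0$, so that the inclusion $V\subseteq W^2_\tau(Q_{2};\hat x,d)$ used in \eqref{pp:e2} is valid. A companion point is that the multiplier $y^*$ is produced as an element of $Y_0^*$, and one must choose its Hahn--Banach extension to $Y^*$ so that $\delta^*(-y^*;K)=\delta^*(-y^*;K\cap Y_0)$ (a minimax/separation argument), so that conclusion (iii) is not weakened. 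The remaining degenerate cases from the second paragraph are routine but must be treated separately, since Lemma \ref{lem3.3} requires all the sets to be nonempty; the subcase $Q^\circ(\hat x,d)=\emptyset$ is the most delicate, and is exactly where non-regular critical directions are accommodated.
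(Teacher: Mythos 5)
Your overall skeleton is the same as the paper's: reduce weak Pareto optimality to the empty intersection of Lemma \ref{lem3.1}, replace each set by an open convex approximation via Proposition \ref{prop2.2} and a Lyusternik-type lemma, apply the Dubovitskii--Milyutin separation of Lemma \ref{lem3.3}, and identify the adjoint sets through Lemmas \ref{lem3.2}, \ref{lem3.5} and \ref{lem3.6}. The divergence --- and the gap --- lies in how you treat the equality constraint when $H'(\hat x)(X)=Y_0$ is a \emph{proper} closed subspace of $Y$. You propose a variant of Lemma \ref{lem3.4} with target space $Y_0$ and claim that $V=\{\bar w\,:\,0\in \langle H'(\hat x),\bar w\rangle+\tfrac12 K\}$ is contained in $W^2_\tau(Q_{2};\hat x,d)$, flagging as the ``main obstacle'' the need to check that the residuals to be corrected lie in $Y_0$. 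That check fails: the second-order remainder of $H$ along the path $\hat x+\varepsilon d+\varepsilon^2(\bar w+w)$ has, in general, a nonzero component transverse to $Y_0$, and no choice of $w$ can remove it because $H'(\hat x)$ only reaches $Y_0$. Concretely, take $X=Y=\mathbb{R}^2$, $H(x,y)=(x,y^3)$, $\hat x=(0,0)$, $d=(0,1)$. Then $Y_0=\mathbb{R}\times\{0\}$ is closed, $\langle H'(\hat x),d\rangle=0$, $H''(\hat x;d)=\{(0,0)\}$, so with $K=\{(0,0)\}$ your $V=\ker H'(\hat x)=\{0\}\times\mathbb{R}\neq\emptyset$, while $Q_{2}=\{(0,0)\}$ and $W^2_\tau(Q_{2};\hat x,d)=\emptyset$ (the second coordinate forces $\bar w_2+w_{n,2}=-1/\varepsilon_n$). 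Hence the inclusion $V\subseteq W^2_\tau(Q_{2};\hat x,d)$ is false, your intersection need not be empty, and Lemma \ref{lem3.3} cannot be invoked. The paper avoids this entirely by a dichotomy: if $H'(\hat x)(X)\subsetneq Y$ it exhibits a singular multiplier $y^*\neq 0$ vanishing on the range of $H'(\hat x)$ (so the conclusions hold with $\mu=0$, $x^*=0$), and only when $H'(\hat x)(X)=Y$ does it use Lemma \ref{lem3.4}, with the full space as target, to build the set $W_H$.

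Two further points. First, even if your range-restricted Lyusternik step were valid, your multiplier $y^*$ lives in $Y_0^*$, and you would still owe the extension to $Y^*$ satisfying $\delta^*(-y^*;K)=\delta^*(-y^*;K\cap Y_0)$; Hahn--Banach alone does not provide this, and the ``minimax/separation argument'' you allude to is not supplied. Second, the degenerate subcases ($Q^\circ(\hat x,d)=\emptyset$ or $V=\emptyset$) are deferred to an unspecified ``separate singular-multiplier argument''; producing a nonzero triple satisfying (i)--(ii) is itself the first-order necessary condition and requires proof --- the paper obtains it by running the regular-case argument at the always-regular direction $\hat d=0$ with $K=\{0\}$. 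The case where some $W_{f_j}=\emptyset$ you handle exactly as the paper does, and your identification of the adjoint sets and the scaling trick extracting (ii) and (iii) from the affine identity are also essentially the paper's; the defect is localized in, but not removable from, the treatment of the equality constraint.
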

\begin{proof} We first prove the theorem when $d$ is a regular direction of $(P_1)$ at $\hat x$. Let us consider the following possible cases.
\medskip

{\bf Case 1.} There exists $j_0\in J$ such that $W_{f_{j_0}}=\emptyset$, where
$$W_{f_{j_0}}:=\bigg\{w\in X\,|\, \langle f'_{j_0}(\hat x),  w\rangle  +\frac{1}2 \sup_{L\in\partial^2 f_{j_0}(\hat x)(d)}
\langle L,  d\rangle <0\bigg\}.$$

Then, for all $w\in X$, we have
\begin{equation}\label{20}
 \langle f'_{j_0}(\hat x), w\rangle  +\frac{1}2 \sup_{L\in\partial^2 f_{j_0}(\hat x)(d)}\langle L, d\rangle \geq  0.
 \end{equation} We choose $\mu_{j_0} =1$,  $\mu_j=0$ for all $j\in J\backslash\{j_0\}$,  $y^*=0$ and $x^*=0$. Fixing any $x\in X$ and substituting $w=t x$ with $ t >0$ into \eqref{20} and then dividing two sides by $t$, we have
\begin{equation*}
 \langle f'_{j_0}(\hat x), x\rangle  + \frac{1}{2t} \sup_{L\in\partial^2 f_{j_0}(\hat x)(d)}\langle L, d\rangle \geq  0.
 \end{equation*}
Letting $t\to +\infty$, we get $\langle f'_{j_0}(\hat x), x\rangle=0$ for all $x\in X$. This implies that $f'_{j_0}(\hat x)=0$. Substituting
$f'_{j_0}(\hat x)=0$ into \eqref{20}, we have
$$\sup_{L\in\partial^2f_{j_0}(\hat x)(d)}\langle L, d\rangle \geq  0.
$$ Hence we obtain the conclusions of the theorem.
\medskip

{\bf Case 2.} $H'(\hat x)(X)$ is a proper subspace of $Y$. Since $H'(\hat x)(X)$ is closed, by the separation theorem, there exists $y_0^*\in Y^*\setminus\{0\}$ such that $y^*_0$ is identically zero on the
range of $H'(\hat x)$. To obtain the desired conclusions, we take $\mu_j=0$ for all $j\in J$,  $x^*=0$, and $y^*= y^*_0$ or
$y^*=-y^*_0$.
\medskip

{\bf Case 3.} $W_{f_{j}}\neq \emptyset$ for all $j\in J$ and $H'(\hat x)(X)= Y.$

Put
$$W_H=\Big\{ w\in X\,|\,  H'(\hat x) w \in - \frac{1}2 K\Big\}$$ and
$$W_Q=W_\alpha^2(Q_{1}; \hat  x, d)= Q^\circ(\hat x, d).$$ It is clear
that $W_H$ and $W_Q$ are nonempty and convex sets. Moreover, $W_Q$ is open. Thus,  the sets $W_{f_j}$, $j\in J$, $W_Q$, and $W_H$  are nonempty. Furthermore, they are convex and the first $m+1$ sets are open. By Proposition~\ref{prop2.2} and Lemma~\ref{lem3.4}, we have
\begin{equation*}
W_{f_j}\subseteq W^2_\delta(f_j; \hat x, d),  \quad W_H \subseteq W^2_\tau(Q_{2}; \hat x,
d).
\end{equation*}
From Lemma~\ref{lem3.1} it follows that
$$\bigg(\bigcap_{j=1}^{m} W_\delta^2(f_j; \hat x, d)\bigg) \cap W_\alpha^2(Q_{1}; \hat x,
d)\cap W^2_{\tau}(Q_{2};\hat x, d)=\emptyset.
$$ Hence,
\begin{equation*}
\bigg(\bigcap_{j=1}^{m} W_{f_j}\bigg) \cap W_Q \cap
W_H=\emptyset.
\end{equation*}
It follows from Lemma~\ref{lem3.3} that there exist affine functions $\varphi_j$,  $j\in J$,  $\phi_Q$ and $\phi_H$, not all constant, such that
$$\varphi_j\in W_{f_j}^+, j\in J,  
\phi_Q\in W_Q^+, \phi_H\in W_H ^+
$$ and
\begin{equation}\label{24}
\sum_{j=0}^{m}\varphi_j+ \phi_Q+\phi_H = 0.
\end{equation}
By  Lemma~\ref{lem3.2}, there exist nonnegative numbers $\mu_j$  such that
\begin{equation}\label{25}
\varphi_j(x) +\mu_j\Big( \langle f'_j(\hat x), x\rangle +
\frac{1}2 \sup_{L\in\partial^2 f_j(\hat x)(d)}\langle L,
d\rangle\Big)\geq 0, \quad \forall x\in X, j\in J.
\end{equation}
 
Assume that $\phi_Q(\cdot)=-\langle x^*,\,\cdot\,\rangle+t$, where $x^*\in X^*$ and $t\in\mathbb{R}$. Since $\phi_Q\in W_Q^+$, we have $\phi_Q(x)\geq 0$ for all $x\in W_Q=Q^\circ(\hat x, d)$. From this and Lemma~\ref{lem3.5}(i) it follows that
$$t\geq \delta^*(x^*; Q^\circ( \hat x, d))\ \ \text{and} \ \ -\langle x^*, u+x\rangle+t\geq 0$$
for all $u\in \overline{Q^\circ(\hat x, d)}$ and $x\in C$. Fix $u_0\in \overline{Q^\circ(\hat x, d)}$, then we have
$$-\langle x^*, x\rangle+t\geq \langle x^*, u_0\rangle,\ \ \forall x\in C.$$
Consequently, $\phi_Q$ is bounded from below on $C$. By Lemma~\ref{lem3.5}(iii), $x^*\in N(Q;\hat x)$ and $\langle x^*, d\rangle=0$.
Clearly,
 \begin{equation}\label{27n}
 \phi_Q(x) = -\langle x^*, x\rangle  + t\geq  -\langle x^*, x\rangle   + \delta^*(x^*; Q^\circ( \hat x,
 d)), \ \ \forall x\in X.
 \end{equation}
By Lemma~\ref{lem3.6}, there exists $y^*\in Y^*$ such that
\begin{equation}\label{29}
\phi_{H}(x) + \langle y^* H'(\hat x), x\rangle \geq \frac{1}{2}  
\delta^*(-y^*; K),\ \ \forall x\in X.
\end{equation}
Adding inequalities \eqref{25}--\eqref{29} and using \eqref{24}, we
obtain
\begin{align}\label{27}
\sum_{j=1}^{m}\mu_j\Big(\langle f'_j(\hat x), x\rangle + \frac{1}2
\sup_{L\in\partial^2 f_j(\hat x)(d)}\langle L, d\rangle \Big)&+ \langle H'(\hat x)^*y^*, x\rangle+\langle x^*, x\rangle\notag\\
&\geq \frac{1}{2}  
\delta^*(-y^*; K)+\delta^*(x^*, Q^\circ(\hat x, d))
\end{align}
for all $x\in X$. Fixing any $z\in X$ and substituting $x=t z$, where $t>0$, into \eqref{27} and dividing two side by $t$, we
obtain
\begin{align}\label{28}
\sum_{j=1}^{m}\mu_j \langle f'_j(\hat x), z\rangle& + \langle H'(\hat x)^*y^*, z\rangle +\langle x^*, z\rangle \notag 
\\
&\geq -\frac{1}{2t}
\sum_{j=1}^{m}\mu_j\sup_{L\in\partial^2 f_j(\hat x)(d)}\langle L, d\rangle+\frac{1}{2t}\Big(\delta^*(-y^*; K) +
2\delta^*(x^*, Q^\circ(\hat x, d))\Big).
\end{align}
Letting $t\to+\infty$ in \eqref{28}, we have
$$\sum_{j=1}^{m}\mu_j \langle f'_j(\hat x), z\rangle + \langle H'(\hat x)^*y^*, z\rangle  +
\langle x^*, z\rangle \geq 0, \ \ \forall z\in X.$$
It follows that
\begin{equation}\label{equ:first-order}
\sum_{j=1}^{m}\mu_jf'_j(\hat x)  +
H'(\hat x)^*y^* + x^*=0.
\end{equation}
Substituting this into \eqref{28}, we have
$$\sum_{j=1}^{m}\mu_j\sup_{L\in\partial^2
    f_j(\hat x)(d)}\langle L, d\rangle  \geq  \delta^*(-y^*; K) +2\delta^*(x^*, Q^\circ(\hat x, d)).
$$
Then, $(\mu_1, \ldots, \mu_m, \lambda_k, x^*, y^*)$ satisfies all conditions \eqref{complementary-condition}--\eqref{second-order-condition}. In this case, we claim that $(\mu_1, \ldots, \mu_m,  x^*)\neq 0$. Indeed, we first show that at least one of multipliers  $\mu_1, \ldots, \mu_m,$  $x^*, y^*$ is different from zero. If otherwise, then, since \eqref{25}--\eqref{29}, $\varphi_j, j\in J$, $\phi_Q$ and $\phi_H$ must be all constants, a contradiction. Thus, if $(\mu_1, \ldots, \mu_m,  x^*)= 0$, then $y^*\neq 0$. Substituting this into \eqref{equ:first-order} we have $H'(\hat x)^*y^*=0$. Since $H'(\hat x)X=Y$, we have $y^*=0$, contrary to the fact that $y^*\neq 0$.

We now consider the case that $d$ is a nonregular critical direction of $(P_1)$ at $\hat x$. Clearly, $\hat d=0$ is a critical direction at $\hat x$. Moreover, it is easy to check that $H''(\hat x; 0)=\{0\}$ and
$$Q^\circ(\hat x; 0)=\text{cone}\, (Q^\circ-\hat x)\neq\emptyset.$$
Thus $\hat d=0$ is also a regular  direction of  $(P)$ at $\hat x$. Now, apply the above proof to $\hat d=0$ and $K=\{0\}$, there exist nonnegative numbers $\mu_1, \ldots, \mu_m$ and functionals $x^*\in N(Q; \hat x)$, $y^*\in Y^*$  not all zero  satisfying condition \eqref{first-order-condition}. The nonregularity of $d$ means that either $H''(\hat x; d)$, or $Q^\circ(\hat x; d)$ is  empty. Thus the left-hand side of \eqref{second-order-condition} equals positive infinity and condition \eqref{second-order-condition} is trivial.  The proof is complete.
\end{proof}
\medskip

\noindent{\em Proof of Theorem \ref{main-result-1}.} By introducing a new variable $z\in Z$, we can reduce problem $(P)$ to the following problem:
	\begin{equation}\notag
	(\widetilde P)\quad
	\begin{cases}
	\text{Min}\,_{\mathbb{R}^m_+}\widetilde F(x, z):=F(x)\\
	\text{subject to}\\
	(x, z)\in X\times Q,\\
	\widetilde{H}(x, z):= (H(x), G(x)-z)=(0, 0).
	\end{cases}
	\end{equation}
	Notice that $\widetilde{H}'(\hat x, \hat z)(X\times Z)=H'(\hat x)X\times (
	G'(\hat x)X-Z)=H'(\hat x)X\times Z$ is a closed subspace in $
	Y\times Z$. By Theorem \ref{main-result}, we can find multipliers which satisfy the desired conclusion of the theorem. $\hfill\Box$

\begin{remark}{\rm It is worth noting that Theorem \ref{main-result-1}  embraces a first-order condition as a special case. Indeed, let $\hat x$ be a  weak Pareto efficient  solution of  $(P)$, and denote by $\Lambda (\hat x)$ the set of Lagrange multipliers  $(\mu, x^*, y^*)\in (\mathbb{R}^m_+\times X^*\times Y^*)\setminus\{0\}$
 which satisfy conditions (i)--(ii) of Theorem \ref{main-result-1}. Applying Theorem \ref{main-result-1} for $d=0$,  the set of Lagrange multipliers $\Lambda (\hat x)$ is always nonempty.
}
\end{remark}

We finish this section by presenting a corollary of Theorem \ref{main-result} for the case that $H=(h_1, \ldots, h_p)$, $G=(g_1, \ldots, g_k)$, and $Q=-\mathbb{R}^k_+$. The obtained result generalizes \cite[Theorem 2.4]{Pando} to the multiobjective optimization case.
\begin{corollary} Consider problem $(P_1)$ where $Q=-\mathbb{R}^k_+$ and  $H=(h_1, \ldots, h_p)$,  $G=(g_1, \ldots, g_k)$ are vector-valued functions with $C^{1,1}(D)$ components. Assume that $\hat x$ is a  weak Pareto efficient  solution of  $(P_1)$. Then, for every critical direction $d$, there exist $(\mu, \lambda, \beta) \in (\mathbb{R}^m_+\times\mathbb{R}^k_+\times\mathbb{R}^p)\setminus\{0\},$    $L_i\in\partial^2f_j(\hat x)(d)$, $j\in J$, and  $M_i\in\partial^2g_i(\hat x)(d)$, $i=1, \ldots, k$, $K_l\in\partial^2h_l(\hat x)(d)$, $l=1, \ldots, p$ such that
the following conditions hold:
\begin{enumerate}[{\rm(i)}] 
    \item the complementarity conditions
    \begin{equation*}
    \lambda_ig_i(\hat x)=0,\ \  i=1, \ldots, k,
    \end{equation*}

    \item  the first-order necessary condition
    \begin{equation*}
    \sum_{j=1}^m\mu_j f_j'(\hat x) + \sum_{i=1}^k\lambda_i g'_i(\hat x) + \sum_{l=1}^p\beta_l h'_l(\hat x)=0,
    \end{equation*}

    \item  the second-order necessary condition
    \begin{equation*}
    \sum_{j=1}^m\mu_j \langle L_j, d\rangle+
    \sum_{i=1}^k \lambda_i\langle M_i,
    d\rangle  +\sum_{l=1}^p \beta_l\langle K_l,
    d\rangle\geq 0.
    \end{equation*}
\end{enumerate}
\end{corollary}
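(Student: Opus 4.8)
The plan is to deduce the statement from Theorem \ref{main-result-1}, applied to the problem in hand, viewed as an instance of $(P)$ with $Y=\mathbb{R}^p$, $Z=\mathbb{R}^k$ and $Q=-\mathbb{R}^k_+$ (one could equally pass first to the form $(P_1)$ as in the proof of Theorem \ref{main-result-1} and invoke Theorem \ref{main-result}). The hypotheses hold here: each $h_l,g_i\in C^{1,1}(D)$ is strictly Fr\'echet differentiable at $\hat x$ — a standard consequence of Theorem \ref{mean-th} together with Proposition \ref{prop2.1}(i) — so $H=(h_1,\ldots,h_p)$ and $G=(g_1,\ldots,g_k)$ are strictly differentiable, and $H'(\hat x)(X)$ is automatically closed, being a subspace of the finite-dimensional space $\mathbb{R}^p$. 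For the given critical direction $d$, Corollary \ref{corollary27} guarantees $H''(\hat x;d)\ne\emptyset$ and $G''(\hat x;d)\ne\emptyset$, so I would fix $y_0\in H''(\hat x;d)$, $z_0\in G''(\hat x;d)$ and apply Theorem \ref{main-result-1} with the convex singletons $K:=\{y_0\}$ and $M:=\{z_0\}$. By the same corollary $y_0=(\langle K_1,d\rangle,\ldots,\langle K_p,d\rangle)$ and $z_0=(\langle M_1,d\rangle,\ldots,\langle M_k,d\rangle)$ for suitable $K_l\in\partial^2 h_l(\hat x)(d)$ and $M_i\in\partial^2 g_i(\hat x)(d)$, which are the elements appearing in the conclusion; write $\beta:=y^*$, $\lambda:=z^*$ for the multipliers produced by Theorem \ref{main-result-1}.

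The zeroth- and first-order parts are then immediate. In this finite-dimensional setting $H'(\hat x)^*y^*=\sum_l\beta_l h'_l(\hat x)$ and $G'(\hat x)^*z^*=\sum_i\lambda_i g'_i(\hat x)$, while, since $\hat x$ is feasible (so $G(\hat x)\le 0$), $N(-\mathbb{R}^k_+;G(\hat x))=\{\lambda\in\mathbb{R}^k_+:\lambda_ig_i(\hat x)=0,\ i=1,\ldots,k\}$; thus condition (i) of Theorem \ref{main-result-1} yields $\lambda\ge 0$ with $\lambda_ig_i(\hat x)=0$, condition (ii) yields the first-order condition of the corollary, and the triple $(\mu,\lambda,\beta)=(\mu,z^*,y^*)$ is nonzero because $(\mu,y^*,z^*)\ne 0$. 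The step I expect to be the main obstacle is showing that the term $\delta^*\!\big(z^*;Q^\circ(G(\hat x),G'(\hat x)d)\big)$ drops out of the second-order condition. Here I would compute $Q^\circ(G(\hat x),G'(\hat x)d)=W^2_\alpha(-\mathbb{R}^k_+;G(\hat x),G'(\hat x)d)$ directly from the definition: writing $I_0:=\{i:g_i(\hat x)<0\}$, $I_1:=\{i:g_i(\hat x)=0,\ \langle g'_i(\hat x),d\rangle<0\}$ and $I_2:=\{i:g_i(\hat x)=0,\ \langle g'_i(\hat x),d\rangle=0\}$ — which exhaust $\{1,\ldots,k\}$ because $d$ is critical — one checks that this set equals $\{\bar w\in\mathbb{R}^k:\bar w_i<0\text{ for all }i\in I_2\}$, in particular it is nonempty, so the support function is finite. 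On the other hand, condition (i) forces $z^*_i=0$ for $i\in I_0$, and $\langle z^*,G'(\hat x)d\rangle=\sum_i\lambda_i\langle g'_i(\hat x),d\rangle=0$ together with $\langle g'_i(\hat x),d\rangle<0$ for $i\in I_1$ forces $z^*_i=0$ for $i\in I_1$ as well; hence $\lambda=z^*$ is supported on $I_2$ with $\lambda_i\ge 0$ there, and therefore $\langle z^*,\bar w\rangle=\sum_{i\in I_2}\lambda_i\bar w_i\le 0$ for every $\bar w\in Q^\circ(G(\hat x),G'(\hat x)d)$, with supremum $0$. Thus $\delta^*\!\big(z^*;Q^\circ(G(\hat x),G'(\hat x)d)\big)=0$.

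To finish, note that $\delta^*(-y^*;K)=-\sum_l\beta_l\langle K_l,d\rangle$ and $\delta^*(-z^*;M)=-\sum_i\lambda_i\langle M_i,d\rangle$, so in view of the previous step condition (iii) of Theorem \ref{main-result-1} reads
$$\sum_{j=1}^m\mu_j\sup_{L\in\partial^2 f_j(\hat x)(d)}\langle L,d\rangle+\sum_{i=1}^k\lambda_i\langle M_i,d\rangle+\sum_{l=1}^p\beta_l\langle K_l,d\rangle\ \ge\ 0.$$
Choosing, for each $j$, an $L_j\in\partial^2 f_j(\hat x)(d)$ that attains the supremum — possible since $\partial^2 f_j(\hat x)(d)$ is $w^*$-compact by Proposition \ref{prop2.1}(i) — turns this into the second-order condition of the corollary. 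Hence conditions (i)--(iii) all hold, which completes the proof.
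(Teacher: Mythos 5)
Your argument is correct and follows the same route as the paper: reduce to Theorem \ref{main-result-1} with the singleton sets $K$ and $M$ supplied by Corollary \ref{corollary27}, then translate the abstract multipliers into $(\mu,\lambda,\beta)$. The only difference is one of completeness — the paper ends with ``there exist multipliers which satisfy the desired conclusion,'' whereas you explicitly verify the hypotheses (strict differentiability, closed range), compute $N(-\mathbb{R}^k_+;G(\hat x))$, show $\delta^*\bigl(z^*;Q^\circ(G(\hat x),G'(\hat x)d)\bigr)=0$ via the index sets $I_0,I_1,I_2$, and use $w^*$-compactness to pick $L_j$ attaining the suprema; these are exactly the steps the paper leaves to the reader, and your treatment of them is correct.
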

\begin{proof} By Corollary \ref{corollary27}, $H''(\hat x; d)$ and $G''(\hat x; d)$ are nonempty. Thus, every critical direction at $\hat x$  is also regular.  Moreover, we have
\begin{align*}
H''(\hat x; d)&\subset  \partial^2 h_1(\hat x)(d)(d) \times\ldots\times \partial^2 h_p(\hat x)(d)(d),
\\
G''(\hat x; d)&\subset \partial^2 g_1(\hat x)(d)(d) \times\ldots\times \partial^2 g_k(\hat x)(d)(d).
\end{align*}
Hence, there exist $K_l\in \partial^2h_l(\hat x)(d)$, $l=1, \ldots, p$, and $M_i\in \partial^2g_i(\hat x)(d)$, $i=1, \ldots, k$, such that 
\begin{align*} 
&\left(\langle K_1, d\rangle, \ldots,  \langle K_p, d\rangle\right)\in H''(\hat x; d),
\\
&\left(\langle M_1, d\rangle, \ldots,  \langle M_k, d\rangle\right)\in G''(\hat x; d).
\end{align*}
Applying Theorem \ref{main-result-1} for $d\in\mathcal{C}(\hat x)$, $Q=-\mathbb{R}^k_+$, $K=\left\{\left(\langle K_1, d\rangle, \ldots,  \langle K_p, d\rangle\right)\right\}$, and $M=\left\{\left(\langle M_1, d\rangle, \ldots,  \langle M_k, d\rangle\right)\right\}$, there exist  multipliers which satisfy the desired conclusion of the corollary.
\end{proof}


\end{document}